\documentclass[12pt]{amsart}
\usepackage{a4wide,enumerate,xcolor}
\usepackage{amsmath,comment,hyperref}
\allowdisplaybreaks

\let\pa\partial
\let\na\nabla
\let\eps\varepsilon
\newcommand{\N}{{\mathbb N}}
\newcommand{\R}{{\mathbb R}}
\newcommand{\T}{{\mathbb T}}
\newcommand{\diver}{\operatorname{div}}

\newtheorem{theorem}{Theorem}
\newtheorem{lemma}[theorem]{Lemma}

\newtheorem{remark}[theorem]{Remark}
\newtheorem{corollary}[theorem]{Corollary}
\newtheorem{definition}{Definition}


\begin{document}

\title[Relaxation approximation of the Busenberg--Travis
system]{Fluid relaxation approximation of the \\ Busenberg--Travis
cross-diffusion system  
}

\author[J. A. Carrillo]{Jos\'e Antonio Carrillo}
\address{Mathematical Institute, University of Oxford, Oxford, OX2 66G, UK}
\email{carrillo@maths.ox.ac.uk}

\author[X. Chen]{Xiuqing Chen}
\address{School of Mathematics, Sun Yat-Sen University, Zhuhai 519082, Guangdong Province, China}
\email{chenxiuqing@mail.sysu.edu.cn}

\author[B. Du]{Bang Du}
\address{School of Mathematics, Sun Yat-Sen University, Zhuhai 519082, Guangdong Province, China}
\email{dubang@mail2.sysu.edu.cn}

\author[A. J\"ungel]{Ansgar J\"ungel}
\address{Institute of Analysis and Scientific Computing, TU Wien, Wiedner Hauptstra\ss e 8--10, 1040 Wien, Austria}
\email{juengel@tuwien.ac.at}

\date{\today}

\thanks{The first author was partially supported by by the EPSRC grants no.~EP/V051121/1 and by the ``Maria de Maeztu'' Excellence Unit IMAG, reference CEX2020-001105-M, funded by MCIN/AEI/10.13039/501100011033. The second and third authors acknowledge support from the National Natural Science Foundation of China (NSFC),  grant 12471206. The last author acknowledges partial support from the Austrian Science Fund (FWF), grant DOI 10.55776/P33010 and 10.55776/F65. This work has received funding from the European Research Council (ERC) under the European Union's Horizon 2020 research and innovation programme, ERC Advanced Grants Nonlocal-CPD, no.~883363, and NEUROMORPH, no.~101018153.}

\begin{abstract}
The Busenberg--Travis cross-diffusion system for segregating populations is approximated by the compressible Navier--Stokes--Korteweg equations on the torus, including a density-dependent viscosity and drag forces. The Korteweg term can be associated to the quantum Bohm potential. The singular asymptotic limit is proved rigorously using compactness and relative entropy methods. The novelty is the derivation of energy and entropy inequalities, which reduce in the asymptotic limit to the Boltzmann--Shannon and Rao entropy inequalities, thus revealing the double entropy structure of the limiting Busenberg--Travis system.
\end{abstract}

\keywords{Zero-relaxation time limit, segregating populations, compressible Navier--Stokes equations, Korteweg term, Bohm potential, relative entropy method.}

\subjclass[2000]{35B40, 35K30, 35K51, 35Q30, 76N05.}

\maketitle


\section{Introduction}

The aim of this paper is to analyze a fluiddynamical approximation of the Busenberg--Travis population cross-diffusion system \cite{BuTr83}
\begin{align}\label{1.BT}
  \pa_t\rho_i - \diver\big(k_i\rho_i\na(\rho_1+\rho_2)\big) = 0 \quad
  \mbox{in }\T^d,\ t>0,\ i=1,2,
\end{align}
where $\rho_i$ is the density of the $i$th population species, $k_i>0$ is a diffusion coefficient, $\T^d$ is the multidimensional torus, and we impose the initial conditions $\rho_i(0)=\rho_i^0$ in $\T^d$ for $i=1,2$. The equations have been suggested by Busenberg and Travis \cite{BuTr83} to describe the segregation of populations, see also \cite{BGHP85,GuPi84}. They have also been proposed, based on interacting particle systems, to introduce short-range repulsion in cell-cell adhesion models \cite{CMSTT19,MT15}.

\subsection{Motivation and model setting}

Our motivation for an approximation of \eqref{1.BT} is to recover the entropy structure of \eqref{1.BT} from the energy and entropy of the approximating fluid system. Indeed, it has been shown in \cite{JPZ22,JuZu20} that system \eqref{1.BT} possesses {\em two} entropies, the Boltzmann--Shannon entropy $H_1$ and the Rao entropy $H_2$,
\begin{align*}
  H_1 = \sum_{i=1}^2\int_{\T^d}k_i^{-1}\rho_i(\log\rho_i-1)dx,
  \quad H_2 = \frac12\int_{\T^d}(\rho_1+\rho_2)^2 dx.
\end{align*}
This means that formally, along solutions to \eqref{1.BT},
\begin{align*}
  \frac{dH_1}{dt} + \int_{\T^d}|\na(\rho_1+\rho_2)|^2 dx &= 0, \\
   \frac{dH_2}{dt} + \int_{\T^d}(k_1\rho_1+k_2\rho_2)
  |\na(\rho_1+\rho_2)|^2 dx &= 0.
\end{align*}
However, the origin of this double entropy structure remained unclear. We propose a fluiddynamical approximation that possesses the thermodynamical entropy $H_1$ and an energy containing $H_2$. Thus, the entropy structure of \eqref{1.BT} originates from the energy and entropy of the associated fluiddynamical system.

Before we make this statement precise, we comment on the cross-diffusion system \eqref{1.BT}. The diffusion matrix associated to \eqref{1.BT} has rank one, such that this system is of mixed hyperbolic--parabolic type. Indeed, we can reformulate \eqref{1.BT} as a diffusion equation for the total density $\rho_1+\rho_2$ and a transport equation for one of the densities:
\begin{align}
  & \pa_t(\rho_1+\rho_2)
  = \diver\big((k_1\rho_1+k_2\rho_2)\na(\rho_1+\rho_2)\big),
  \label{1.lim1} \\
  & \pa_t\rho_i+\diver(\rho_i\bar{u}_i)=0\quad\mbox{in }\T^d,\ t>0,\
  i=1,2, \label{1.lim2}
\end{align}
where $\bar{u}_i=-k_i\na(\rho_1+\rho_2)$ is the velocity associated to the $i$th species. Observe that the equations decouple if $k:=k_1=k_2$; then $\rho_1+\rho_2$ solves a porous-medium equation, and the densities $\rho_i$ are transported with the common velocity $\bar{u}=-k\na(\rho_1+\rho_2)$. We refer to \cite{DHJ23,DrJu20} for details on the hyperbolic--parabolic structure.

A fluiddynamical approximation of \eqref{1.BT} takes the form
\begin{equation}\label{1.S}
\begin{aligned}
  & \pa_t\rho_i + \diver(\rho_i u_i) = 0, \quad i=1,2, \\
  & \eps\pa_t(\rho_iu_i) + \eps\diver(\rho_iu_i\otimes u_i)
  = \eps\diver S - k_i^{-1}\rho_i u_i - \rho_i\na(\rho_1+\rho_2),
\end{aligned}
\end{equation}
where $u_i$ is the partial velocity of the $i$th species, $\eps>0$ is a small number, $S$ is the stress tensor, $-k_i^{-1}\rho_iu_i$ is the relaxation term, and $-\rho_i\na(\rho_1+\rho_2)$ is a force term. The formal limit $\eps\to 0$ in \eqref{1.S} leads to \eqref{1.lim1}--\eqref{1.lim2}. As this limit is singular, its rigorous proof is delicate.

The main difficulty comes from the force term $-\rho_i\na(\rho_1+\rho_2)$, since the energy of the fluiddynamical equations does not provide any gradient estimate. This issue does not occur in the relaxation-time limit of the Euler--Poisson equations, since the force reads as $-\rho_i\na\Phi$, and the electric potential $\Phi$ solves the Poisson equation, thus providing sufficient regularity to apply the div--curl lemma \cite{JuPe99}. The lack of a gradient bound can be overcome by allowing for a Korteweg term in \eqref{1.S}, leading to Euler--Korteweg equations \cite{LaTz13} or Navier--Stokes--Korteweg equations \cite{CaLa23}. More precisely, we add to the right-hand side of \eqref{1.S} the expression
\begin{align*}
  K = \eps\rho_i\na\bigg(\kappa(\rho_i)\Delta\rho_i
  + \frac12\kappa'(\rho_i)|\na\rho_i|^2\bigg),
\end{align*}
where $\kappa(\rho_i)$ is the capillarity coefficient. In this paper, we choose $\kappa(\rho_i)=1/(2\rho_i)$, which leads to
\begin{align*}
  K = \eps\rho_i\na\bigg(\frac{\Delta\sqrt{\rho_i}}{\sqrt{\rho_i}}\bigg).
\end{align*}
The expression $\Delta\sqrt{\rho_i}/\sqrt{\rho_i}$ is known in quantum mechanics as the Bohm potential, and equations \eqref{1.S} become the quantum Navier--Stokes equations studied in, e.g., \cite{ACLS21,Jue10,LaVa17,VaYu16}. Other choices for $\kappa(\rho_i)$ are discussed in Remark \ref{rem.gener}. As in \cite{Jue10,VaYu16}, we use the density-dependent stress tensor $S=\rho_i\na u_i$. This dependence is needed in the derivation of the entropy inequality.

A second difficulty is due to the fact that we control the kinetic energy $\rho_i|u_i|^2$ in $L^1(\T^d)$ only. This is not sufficient to prevent concentration phenomena in the convective term $\rho_i u_i\otimes u_i$. A way out is the introduction of additional drag forces like in \cite[Sec.~9]{BrDe07} and used in the context of the quantum Navier--Stokes equations in \cite{VaYu16}.

Summarizing, we perform the asymptotic limit $\eps\to 0$ in the compressible Navier--Stokes equations with Korteweg regularization and drag forces:
\begin{align}
  & \pa_t\rho_i + \diver(\rho_i u_i) = 0 \quad\mbox{in }\T^d,\ t>0,
  \ i=1,2, \label{1.mass} \\
  & \eps\pa_t(\rho_iu_i) + \eps\diver(\rho_iu_i\otimes u_i)
  = \eps\rho_i\na\bigg(
  \frac{\Delta\sqrt{\rho_i}}{\sqrt{\rho_i}}\bigg)
  + \eps\diver(\rho_i\na u_i) \label{1.mom} \\
  &\phantom{xx}
  -\eps u_i - \eps\rho_i u_i|u_i|^2 - k_i^{-1}\rho_iu_i
  - \rho_i\na(\rho_1+\rho_2), \nonumber
\end{align}
subject to the initial conditions
\begin{align}\label{1.ic}
  \rho_i(0) = \rho_i^0, \quad \rho_i(0)u_i(0) = \rho_i^0 u_i^0
  \quad\mbox{in }\T^d,\ i=1,2.
\end{align}
In the following, we set $\rho=(\rho_1,\rho_2)$ and $u=(u_1,u_2)$.

\subsection{Key ideas}\label{sec.ideas}

A priori estimates are derived by estimating the energy
\begin{align}\label{1.E}
  E(\rho,u) = \int_{\T^d}\bigg(\frac12(\rho_1+\rho_2)^2
  + \frac{\eps}{2}\sum_{i=1}^2\rho_i|u_i|^2
  + \eps\sum_{i=1}^2|\na\sqrt{\rho_i}|^2\bigg)dx,
\end{align}
which is the sum of the potential, kinetic, and Korteweg energies. A formal computation, made rigorous on the approximate level in Section \ref{sec.ener}, shows that
\begin{align*}
  \frac{dE}{dt} + \sum_{i=1}^2\int_{\T^d}\big(k_i^{-1}\rho_i|u_i|^2
  + \eps\rho_i|\na u_i|^2 + \eps|u_i|^2
  + \eps\rho_i|u_i|^4\big)dx = 0.
\end{align*}
Unfortunately, this equality does not provide any gradient bound for the densities independent of $\eps$. Our main idea is to obtain such a bound from the entropy
\begin{equation}\label{1.H}
  H(\rho) = \sum_{i=1}^2k_i^{-1}\int_{\T^d}\rho_i(\log\rho_i-1)dx.
\end{equation}
A formal computation (made rigorous for the approximate solutions in Section \ref{sec.ent}) shows that
\begin{align*}
  \frac{dH}{dt} + \int_{\T^d}|\na(\rho_1+\rho_2)|^2 dx
  + \frac{\eps}{2}\int_{\T^d}\rho_i|D^2\log\rho_i|^2 dx
  \le R,
\end{align*}
where the remainder $R$ depends on the unknowns $\rho_i$, $u_i$ and their derivatives (like $\sqrt{\rho_i}\na u_i$), but it is independent of $\eps$. The last integral on the left-hand side gives ($\eps$-dependent) estimates in $H^2(\T^d)$ and $W^{1,4}(\T^d)$ from the inequality
\begin{align}\label{1.ineq}
  \int_{\T^d}\rho_i|D^2\log\rho_i|^2 dx
  \ge c(d)\int_{\T^d}\big(|\Delta\sqrt{\rho_i}|^2
  + |\na\sqrt[4]{\rho_i}|^4\big) dx
\end{align}
for some $c(d)>0$, which holds for sufficiently smooth functions $\rho_i$; see \cite[Lemma 2.2]{JuMa08} and \cite[Appendix]{Jue10} (or \cite[Lemma 2.1]{VaYu16}) for a proof. Then the remainder $R$ can be controlled by the bounds coming from the energy and \eqref{1.ineq}. Since the limiting system \eqref{1.BT} may possess discontinuous solutions \cite{BDM10}, we cannot expect gradient bounds for the individual densities $\rho_i$ but only for the sum $\rho_1+\rho_2$. Thus, we cannot expect better estimates.

Denoting by $(\rho^\eps,u^\eps)$ a weak solution to \eqref{1.mass}--\eqref{1.ic}, the energy and entropy estimates together with the Aubin--Lions lemma yield strong convergence of the sum $\bar\rho^\eps:=\rho_1^\eps+\rho_2^\eps$, but we have only weak convergence for $\rho_i^\eps$ and $\na\bar\rho^\eps$. Thus, the limit in the product $\rho_i^\eps\na\bar\rho^\eps$ cannot be easily identified.

We show two results. First, we prove that the strong limit $\bar\rho$ of $\bar\rho^\eps$ solves \eqref{1.lim1} with a ``defect'',
\begin{align*}
  \pa_t\bar\rho - \diver\big((k_1\rho_1+k_2\rho_2)\na\bar\rho\big)
  = (k_2-k_1)(k_2^{-1}J_2+\rho_2\na\bar\rho)
\end{align*}
where $\rho_i$ is the weak limit of $\rho_i^\eps$ and $J_2$ is the weak limit of $\rho_2^\eps u_2^\eps$ (see Theorem \ref{thm.eps}). The right-hand side vanishes if $k_1=k_2$ or if we can identify $J_2$ with $-k_2\rho_2\na\bar\rho$. Unfortunately, we have not been able to prove this identification for general values of $k_1$, $k_2$. Indeed, neither the div--curl lemma nor Feireisl's viscous flux approach can be applied because of the lack of suitable gradient bounds uniform in $\eps$.

Second, we consider the case $k_1=k_2=1$. Then $\bar\rho$ solves the quadratic porous-medium equation (see \eqref{1.lim1}), but we still have no information about the evolution of $\rho_i$. To derive the dynamics, we apply the relative entropy method. The idea is to compare a solution $(\rho^\eps,u^\eps)$ to \eqref{1.mass}--\eqref{1.mom} with a solution $(\bar\rho,\bar{u})$ to the limit system \eqref{1.lim1}--\eqref{1.lim2} with $k_1=k_2=1$ and $\bar{u}=-\na\bar\rho$. The relative entropy (more precisely: relative energy) is defined by
\begin{align*}
  E_R(\rho^\eps,u^\eps|\bar\rho,\bar{u})
  = \int_{\T^d}\bigg\{\frac12(\bar\rho^\eps-\bar\rho)^2
  + \eps\sum_{i=1}^2\bigg(\frac{\rho_i^\eps}{2}|u_i^\eps-\bar{u}|^2
  + |\na(\rho_i^\eps)^{1/2}|^2\bigg)\bigg\}dx.
\end{align*}
A computation, made rigorous in Section \ref{sec.eps2}, shows that
\begin{align*}
  \frac{dE_R}{dt}(\rho^\eps,u^\eps|\bar\rho,\bar{u})
  + \sum_{i=1}^2\int_{\T^d}\rho_i^\eps|u_i^\eps-\bar{u}|^2 dx
  \le C\sqrt[4]{\eps}.
\end{align*}
Here, we need the assumption $k_1=k_2$. We infer that $(\rho_i^\eps)^{1/2}(u_i^\eps-\bar{u})\to 0$ strongly in $L^2(0,T;L^2(\T^d))$, which implies that $\rho_i^\eps u_i^\eps \rightharpoonup \rho_i\bar{u}$ weakly in $L^2(0,T;L^{4/3}(\T^d))$. This allows us to identify $J_i$ with $\rho_i\bar{u}=-\rho_i\na\bar\rho$, and $\rho_i$ solves the transport equation \eqref{1.lim2}.

\subsection{State of the art}

There are many results in the literature on the relaxation limit in hyperbolic systems. General results can be found, for instance, in \cite{DoMa04}. The relaxation limit in Euler--Poisson systems, leading to the drift-diffusion equations, was proved in \cite{JuPe99}, exploiting the regularizing effect of the Poisson equation. Relaxation-time limits were also performed in Euler--Maxwell \cite{PWG11} and quantum hydrodynamic equations \cite{JLM06}. Using compactness methods, relaxation limits in the compressible Navier--Stokes--Poisson equations \cite{KoOg13} and in the quantum Navier--Stokes equations \cite{ACLS21} were proved. Note that our limit is more delicate since the regularizing terms vanish in the limit.

The relative entropy method was first used by Dafermos \cite{Daf79} and Di Perna \cite{DiP79}. It was extended later by Lattanzio and Tzavaras \cite{LaTz13} to compare the solution to the frictional Euler equations with the solution to the porous-medium equation. This technique was also applied in the analysis of the high-friction regime of Euler--Korteweg equations \cite{HJT19}, for more general aggregation-diffusion equations \cite{CPW}, compressible Navier--Stokes--Korteweg systems \cite{CaLa23}, and Euler--Riesz models \cite{ACC,AGT}.

The Busenberg--Travis system \eqref{1.BT} can be derived from interacting particle systems in the mean-field limit, even for an arbitrary number of species \cite{CDJ19} and for nonlinear pressures \cite{CaGu24}. In the general case, the limiting system reads as
\begin{align}\label{1.gBT}
  \pa_t u_i = \diver(u_i\na p_i(u)), \quad
  p_i(u) = \sum_{j=1}^n a_{ij}u_j\quad\mbox{in }\T^d,\ i=1,\ldots,n,
\end{align}
where $a_{ij}\ge 0$ are some numbers. If the matrix $(a_{ij})$ is positive definite, a global existence analysis can be found in \cite{JPZ22}. If the matrix $(a_{ij})$ is of rank one, i.e.\ $a_{ij}=k_i$ like in our situation, the existence of global classical solutions to \eqref{1.BT} (under the positivity assumption $\sum_{i=1}^n\rho_i^0\ge c>0$) was proved in \cite{DrJu20}. The positivity ensures the regularity of the total density. If the positivity assumption is relaxed to nonnegativity, the existence of global measure-valued solutions can be shown \cite{HoJu23}. Steady states may be discontinuous \cite{CHS18}, and there is some numerical evidence \cite{BDM10} that this may be also true for transient solutions. The lack of regularity motivated the authors of \cite{CFSS18} to work in the one-dimensional setting with solutions of bounded variation.

The limit in \eqref{1.mass}--\eqref{1.mom} seems to be new, and we believe that it contributes to the understanding of the entropy structure of the Busenberg--Travis cross-diffusion system and possibly of related models.

\subsection{Main results}

We first define our notion of weak solution. This is necessary since the Korteweg term in \eqref{1.mom} needs special care.

\begin{definition}
We call $(\rho,u)$ with $\rho=(\rho_1,\rho_2)$ and $u=(u_1,u_2)$ a {\em weak solution} to \eqref{1.mass}--\eqref{1.ic} on $(0,T)$ if, for $i=1,2$, $\rho_i\ge 0$ in $\T^d\times(0,T)$,
\begin{align*}
  & \rho_i\in L^\infty(0,T;L^2(\T^d)), \quad
  \sqrt{\rho_i}u_i\in L^\infty(0,T;L^2(\T^d)), \\
  &\sqrt{\rho_i}\in L^2(0,T;H^2(\T^d)), \quad
  \rho_1+\rho_2\in L^2(0,T;H^1(\T^d)), \\
  & \sqrt{\rho_i}|\na u_i|\in L^2(0,T;L^2(\T^d)), \quad
  \sqrt[4]{\rho_i}|u_i|\in L^4(0,T;L^4(\T^d)),
\end{align*}
for all
$\phi\in C_0^\infty(\T^d\times[0,T))$ and $\psi\in C_0^\infty(\T^d\times[0,T);\R^d)$,
\begin{align}\label{1.massw}
  0 &= -\int_0^T\int_{\T^d}\rho_i\pa_t\phi dxdt
  - \int_{\T^d}\rho_i^0\phi(0)dx
  - \int_0^T\int_{\T^d}\rho_i u_i\cdot\na\phi dxdt, \\
  0 &= -\eps\int_0^T\int_{\T^d}\rho_i u_i\cdot\pa_t\psi dxdt
  - \eps\int_{\T^d}\rho_i^0 u_i^0\cdot\psi(0)dx
  - \eps\int_0^T\int_{\T^d}\rho_i(u_i\otimes u_i):\na\psi dxdt
  \label{1.momw} \\
  &\phantom{xx}+ \eps\int_0^T\int_{\T^d}\rho_i\na u_i:\na\psi dxdt
  + \eps\int_0^T\int_{\T^d}\Delta\sqrt{\rho_i}\big(2\na\sqrt{\rho_i}
  \cdot\psi + \sqrt{\rho_i}\diver\psi\big)dxdt \nonumber \\
  &\phantom{xx}
  + \eps\int_0^T\int_{\T^d}(u_i + \rho_i|u_i|^2u_i)\cdot\psi dxdt
  + \int_0^T\int_{\T^d}
  \big(k_i^{-1}\rho_i u_i + \rho_i\na(\rho_1+\rho_2)\big)
  \cdot\psi dxdt, \nonumber
\end{align}
and the initial conditions \eqref{1.ic} hold in the sense of distributions.
\end{definition}

We impose the following assumptions:
\begin{itemize}
\item[(A1)] Parameter: $d=1,2,3$, $T>0$, and $k_1>0$, $k_2>0$.
\item[(A2)] Initial data: $\rho_i^0\in L^2(\T^d)$, $\sqrt{\rho_i^0}|u_i^0|\in L^2(\T^d)$, $\sqrt{\rho_i^0}\in H^1(\T^d)$, $\log\rho_i^0\in L^1(\T^d)$ for $i=1,2$.
\end{itemize}
The assumption of at most $d=3$ space dimensions is due to Sobolev embeddings (we need $H^2(\T^d)\hookrightarrow L^\infty(\T^d)$ in Lemma \ref{lem.space}). Our analysis strongly depends on inequality \eqref{1.ineq}. To avoid boundary integrals, we consider equations \eqref{1.mass}--\eqref{1.mom} on the torus. The regularity of the initial data is needed to obtain a finite initial energy and entropy. We may allow for reaction terms in \eqref{1.mass} if the reactions depend nonlinearly on the total density $\rho_1+\rho_2$ only (since we have strong convergence only for the sum). We discuss further generalizations of the nonlinearities in Remark \ref{rem.gener}.

Our first main result reads as follows.

\begin{theorem}[Existence of solutions]\label{thm.ex}
Let Assumptions (A1)--(A2) hold. Then there exists a weak solution $(\rho,u)$ to \eqref{1.mass}--\eqref{1.ic} with $\rho=(\rho_1,\rho_2)$ and $u=(u_1,u_2)$ such that
\begin{align}\label{1.Eineq}
   E(\rho(t),u(t)) & + \sum_{i=1}^2\int_0^t\int_{\T^d}
  k_i^{-1}\rho_i|u_i|^2 dxds \\
  &+ \eps\sum_{i=1}^2\int_0^t\int_{\T^d}
  \big(\rho_i|\na u_i|^2 + |u_i|^2 + \rho_i|u_i|^4\big)dxds
  \le E(\rho^0,u^0), \nonumber \\
  & {H(\rho(t))}
  + C_1(d)\eps \int_0^t\int_{\T^d}\big(|\Delta\sqrt{\rho_i}|^2
  + |\na\sqrt[4]{\rho_i}|^4\big)dxds
  \le{C_2(\rho^0,u^0)}, \label{1.Hineq}
\end{align}
where $C_1(d)>0$ is a constant only depending on the space dimension $d$, $C_2(\rho^0,u^0)>0$ depends on the initial data (but not on $\eps$), and we recall definitions \eqref{1.E} and \eqref{1.H} of $E$ and $H$. Moreover, we have the regularity
\begin{align*}
  \pa_t\rho_i\in L^2(0,T;L^2(\T^d)), \quad
  \pa_t(\rho_iu_i)\in L^{4/3}(0,T;H^s(\T^d)'), \quad s>d/2+1.
\end{align*}
Therefore, the initial condition for $\rho_i$ holds a.e.\ in $\T^d$, and the initial condition for $\rho_iu_i$ holds in $H^s(\T^d)'$.
\end{theorem}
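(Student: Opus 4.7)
The plan is to follow the two-step construction standard for quantum Navier--Stokes systems with degenerate density-dependent viscosity and drag: first build smooth solutions of a regularized version of \eqref{1.mass}--\eqref{1.mom} uniformly in an auxiliary parameter $\delta>0$, then pass to the limit $\delta\to 0$ using the $\delta$-uniform versions of the bounds \eqref{1.Eineq}--\eqref{1.Hineq}. I would regularize by adding an artificial viscosity $\delta\Delta\rho_i$ to each continuity equation (to keep $\rho_i$ strictly positive), a higher-order dissipation or Faedo--Galerkin projection in the momentum equation, and a lower truncation $\delta$ on the initial densities. Smooth approximate solutions on each level are then produced by a Schauder fixed-point argument in the velocity variable, as carried out for the quantum Navier--Stokes equations with drag in \cite{VaYu16}.

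The heart of the proof is the derivation of \eqref{1.Eineq} and \eqref{1.Hineq} at the approximate level. For \eqref{1.Eineq}, I would test each momentum equation with $u_i$, integrate in space, and combine with the continuity equations. This recovers the time derivatives of $\frac12(\rho_1+\rho_2)^2$ (from the coupling force summed over $i$ against $\pa_t(\rho_1+\rho_2)$), of $\frac{\eps}{2}\rho_i|u_i|^2$, and, via the identity $\eps\int\rho_i\na(\Delta\sqrt{\rho_i}/\sqrt{\rho_i})\cdot u_i\,dx$ combined with $\pa_t\rho_i$, of the Korteweg energy $\eps\int|\na\sqrt{\rho_i}|^2\,dx$; the remaining terms supply the dissipation $\rho_i|\na u_i|^2$, $k_i^{-1}\rho_i|u_i|^2$, $|u_i|^2$, and $\rho_i|u_i|^4$. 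For \eqref{1.Hineq}, I would compute $\frac{dH}{dt}=\sum_i k_i^{-1}\int u_i\cdot\na\rho_i\,dx=\sum_i\int\na\log\rho_i\cdot(k_i^{-1}\rho_i u_i)\,dx$ and substitute for $k_i^{-1}\rho_i u_i$ using the momentum equation. The force term yields the leading dissipation $-\int|\na(\rho_1+\rho_2)|^2\,dx$, while the Korteweg remainder produces $\frac{\eps}{2}\int\rho_i|D^2\log\rho_i|^2\,dx$ after integration by parts; by \eqref{1.ineq} this controls $\eps\int(|\Delta\sqrt{\rho_i}|^2+|\na\sqrt[4]{\rho_i}|^4)\,dx$. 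The convective, viscous, and drag remainders of order $\eps$ are absorbed by Cauchy--Schwarz and Young using quantities already bounded by the energy or by \eqref{1.ineq}.

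Passing to the limit $\delta\to 0$ rests on the uniform bounds $\sqrt{\rho_i^\delta}\in L^2(0,T;H^2)\cap L^\infty(0,T;H^1)$, $\rho_i^\delta\in L^\infty(0,T;L^2)$, $\rho_1^\delta+\rho_2^\delta\in L^2(0,T;H^1)$, $\sqrt{\rho_i^\delta}u_i^\delta\in L^\infty(0,T;L^2)$, $\sqrt{\rho_i^\delta}\na u_i^\delta\in L^2$, and $(\rho_i^\delta)^{1/4}u_i^\delta\in L^4$ read off from \eqref{1.Eineq}--\eqref{1.Hineq}, together with $\pa_t\rho_i^\delta\in L^2(0,T;L^2)$ from the (regularized) mass equation. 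Aubin--Lions then yields strong convergence of $\sqrt{\rho_i^\delta}$ in $L^2(0,T;H^1)$, hence of $\rho_i^\delta$ in $L^p$ for suitable $p$. The main obstacle is identifying the limits of the nonlinear terms $\rho_i u_i\otimes u_i$ and $\rho_i\na u_i$: the drag-induced extra integrability $(\rho_i^\delta)^{1/4}u_i^\delta\in L^4$ is precisely what prevents concentrations in the convective term, following \cite{BrDe07,VaYu16}; and the factorization $\rho_i\na u_i=\sqrt{\rho_i}\cdot\sqrt{\rho_i}\na u_i$ allows combining strong convergence of $\sqrt{\rho_i^\delta}$ with weak $L^2$-convergence of $\sqrt{\rho_i^\delta}\na u_i^\delta$ to pass to the limit. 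The Korteweg term is handled directly in its weak form \eqref{1.momw} by pairing strong $L^2$-convergence of $\na\sqrt{\rho_i^\delta}$ with weak $L^2$-convergence of $\Delta\sqrt{\rho_i^\delta}$. Finally, the temporal regularity $\pa_t(\rho_i u_i)\in L^{4/3}(0,T;H^s(\T^d)')$ for $s>d/2+1$ is read off from \eqref{1.momw} by estimating each term against $\psi\in H^s\hookrightarrow W^{1,\infty}$; the quartic drag contribution $\rho_i|u_i|^2 u_i$, which is bounded in $L^{4/3}(0,T;L^1(\T^d))$ via $(\rho_i)^{1/4}u_i\in L^4$, dictates the exponent $4/3$.
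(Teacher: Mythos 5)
Your overall strategy coincides with the paper's: artificial viscosity $\delta\Delta\rho_i$ plus a Faedo--Galerkin/fixed-point construction, the energy inequality from testing the momentum equation with $u_i$ (combined with the mass equation), the entropy inequality from testing with $\na\log\rho_i$, the inequality \eqref{1.ineq} to convert $\rho_i|D^2\log\rho_i|^2$ into the $H^2$ and $W^{1,4}$ quantities, and a Vasseur--Yu truncation argument for $\rho_i|u_i|^2u_i$ in the compactness step. However, there is one step in your entropy estimate that fails as written. You claim that all $\eps$-order remainders, including the drag remainders, ``are absorbed by Cauchy--Schwarz and Young using quantities already bounded by the energy or by \eqref{1.ineq}.'' For the linear drag this is the term $-\eps\int_0^t\int_{\T^d}u_i\cdot\na\log\rho_i\,dx\,ds$, and no available bound controls it this way: you have no unweighted estimate on $\na u_i$ (only $\sqrt{\rho_i}\,\na u_i$), no bound on $\na\log\rho_i$ or $1/\sqrt{\rho_i}$ uniform in the approximation parameters, and writing it as $2\int u_i\cdot\na\sqrt{\rho_i}/\sqrt{\rho_i}$ does not help. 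The paper handles it by substituting the (regularized) mass equation, which turns this term into $\eps\frac{d}{dt}\int\log\rho_i\,dx$ plus $\eps\int\diver u_i\,dx$ (zero on the torus) minus the sign-definite quantity $\delta\eps\int|\na\log\rho_i|^2\,dx$; the endpoint terms are then controlled using $\log\rho_i\le\rho_i-1$ and the hypothesis $\log\rho_i^0\in L^1(\T^d)$ in (A2) --- which is precisely why that hypothesis appears. Your sketch never uses this assumption, which is a signal the argument does not close as stated.

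A second, smaller point concerns the approximate scheme itself. If you put $\delta\Delta\rho_i$ into the continuity equation and only a Galerkin projection (or generic higher-order dissipation) into the momentum equation, the energy identity at the approximate level acquires an uncancelled, sign-indefinite contribution of the form $\tfrac{\delta\eps}{2}\int\Delta\rho_i^N|u_i^N|^2\,dx$ coming from $\pa_t(\rho_i^Nu_i^N)+\diver(\rho_i^Nu_i^N\otimes u_i^N)=\rho_i^N(\pa_t u_i^N+u_i^N\cdot\na u_i^N)+\delta\Delta\rho_i^N\,u_i^N$, and it is not clear how to control it uniformly in $\delta$. The paper avoids this by adding the extra regularization $\delta\eps\,\diver(u_i^N\otimes\na\rho_i^N)$ to the momentum equation, whose contribution when tested with $u_i^N$ exactly cancels the problematic term (and it must then also be tracked in the entropy estimate, where it partially cancels against $I_1$). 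You would either need this compensating term or a different regularization for which the energy and entropy balances still close; as proposed, this part of the construction is incomplete. (A cosmetic remark: the exponent $4/3$ for $\pa_t(\rho_iu_i)$ in the paper is dictated by the Korteweg term and by $\diver(u_i\otimes\na\rho_i)$, not by the cubic drag, though your alternative bound for the drag is also consistent with $4/3$.)
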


The proof of the existence of solutions is based on an approximate scheme. We add the parabolic regularization $\delta\Delta\rho_i$ to the mass balance equation \eqref{1.mass} and the regularization $\delta\eps\diver(u_i\otimes\na\rho_i)$ to the momentum balance equation (similarly as in the existence analysis for the compressible Navier--Stokes equations; see \cite[Sec.~7.2]{Fei04}). The latter term is needed to compensate some contributions coming from the parabolic regularization when deriving the energy inequality. The local existence of solutions is shown by the Faedo--Galerkin method. The solution can be extended to a global one thanks to the energy estimate. The entropy production in \eqref{1.Hineq} is obtained by applying inequality \eqref{1.ineq}.

If $k_1=k_2$, a computation similar to the proof of Lemma \ref{lem.HN} shows that the entropy inequality \eqref{1.Hineq} can be improved by replacing $C_2(\rho^0,u^0)$ by $H(\rho^0)+C\sqrt[4]{\eps}$, where $C>0$ is independent of $\eps$.

\begin{theorem}[Limit $\eps\to 0$]\label{thm.eps}
Let $(\rho^\eps,u^\eps)$ be a weak solution to \eqref{1.mass}--\eqref{1.ic} as constructed in Theorem \ref{thm.ex}. Then there exists a subsequence (not relabeled) such that for $i=1,2$,
\begin{align*}
  \rho_i^\eps\rightharpoonup\rho_i &\quad\mbox{weakly* in }
  L^\infty(0,T;L^2(\T^d)), \\
  \rho_1^\eps+\rho_2^\eps\to \rho_1+\rho_2 &\quad\mbox{strongly in }
  L^2(0,T;L^2(\T^d)), \\
  \rho_i^\eps u_i^\eps\rightharpoonup J_i &\quad\mbox{weakly in }
  L^2(0,T;L^{4/3}(\T^d)),
\end{align*}
and $\bar\rho:=\rho_1+\rho_2$ solves
\begin{align*}
  \pa_t\bar\rho - \diver\big((k_1\rho_1+k_2\rho_2)\na\bar\rho)\big)
  = -(k_2-k_1)\diver (k_2^{-1}J_2+\rho_2\na\bar\rho)
  \quad\mbox{in }\T^d,\ t>0,
\end{align*}
with the initial condition $\bar\rho(0)=\rho_1^0+\rho_2^0$ in the sense of $W^{1,4}(\T^d)'$.
\end{theorem}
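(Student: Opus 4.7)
The plan is to extract $\eps$-uniform bounds from the energy and entropy inequalities \eqref{1.Eineq}--\eqref{1.Hineq}, obtain weak and strong compactness of the relevant quantities, and pass to the limit in both the mass and momentum balances to derive the advertised equation for $\bar\rho=\rho_1+\rho_2$.

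From the energy inequality, the $\eps$-independent contributions give $\rho_i^\eps$ bounded in $L^\infty(0,T;L^2(\T^d))$ (via the $\tfrac12(\bar\rho^\eps)^2$ term) and $\sqrt{\rho_i^\eps}u_i^\eps$ bounded in $L^2(0,T;L^2(\T^d))$ (via the relaxation term $k_i^{-1}\rho_i|u_i|^2$). Pairing $\sqrt{\rho_i^\eps}\in L^\infty(L^4)$ with the previous bound via Hölder shows that $\rho_i^\eps u_i^\eps$ is uniformly bounded in $L^2(0,T;L^{4/3}(\T^d))$. From the entropy inequality together with \eqref{1.ineq}, I obtain $\bar\rho^\eps$ bounded in $L^2(0,T;H^1(\T^d))$, as well as $\sqrt{\eps}\,\Delta\sqrt{\rho_i^\eps}$ in $L^2(L^2)$ and $\eps^{1/4}\na\sqrt[4]{\rho_i^\eps}$ in $L^4(L^4)$ uniformly in $\eps$. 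Standard weak-$*$ compactness arguments extract the subsequences announced in the theorem. For the strong convergence of $\bar\rho^\eps$, the mass balance yields $\pa_t\bar\rho^\eps=-\sum_i\diver(\rho_i^\eps u_i^\eps)$ bounded in $L^2(0,T;(W^{1,4}(\T^d))')$; combined with the $L^2(H^1)$-bound and the compact embedding $H^1\hookrightarrow L^2$, the Aubin--Lions lemma provides $\bar\rho^\eps\to\bar\rho$ in $L^2(0,T;L^2(\T^d))$.

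Passing to the limit in the summed weak mass balance \eqref{1.massw} yields $\pa_t\bar\rho+\diver(J_1+J_2)=0$. To rewrite this equation, I pass to the limit in each weak momentum balance \eqref{1.momw}, with the goal of showing that every $\eps$-prefactored term vanishes so that the limit reduces to $k_i^{-1}J_i+w_i=0$ in $\mathcal{D}'$, where $w_i$ denotes the distributional limit of $\rho_i^\eps\na\bar\rho^\eps$ (which exists along a subsequence since the product is uniformly bounded in $L^2(0,T;L^1(\T^d))$). This is the main obstacle. The time-derivative, convective, viscous, friction and quartic-drag contributions each produce a gain of at least $\sqrt{\eps}$ by Hölder-pairing the bounded square-root quantities. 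The Korteweg contribution is the delicate point: the part $\eps\int\Delta\sqrt{\rho_i^\eps}\sqrt{\rho_i^\eps}\diver\psi\,dxdt$ is $O(\sqrt{\eps})$ by pairing $\sqrt{\eps}\Delta\sqrt{\rho_i^\eps}\in L^2(L^2)$ against $\sqrt{\rho_i^\eps}\in L^\infty(L^4)$. The remaining part $\eps\int\Delta\sqrt{\rho_i^\eps}\na\sqrt{\rho_i^\eps}\cdot\psi\,dxdt$ cannot be controlled by naive Cauchy--Schwarz, which only delivers $O(1)$ because of the $O(\eps^{-1/2})$ blow-up of both factors; I instead use the identity $\na\sqrt{\rho_i^\eps}=2\sqrt[4]{\rho_i^\eps}\,\na\sqrt[4]{\rho_i^\eps}$ together with the $L^4(L^4)$-bound on $\eps^{1/4}\na\sqrt[4]{\rho_i^\eps}$ from the entropy dissipation to recover the missing $\sqrt{\eps}$-factor.

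Finally, the chain-rule identity $\bar\rho^\eps\na\bar\rho^\eps=\tfrac12\na(\bar\rho^\eps)^2$, combined with the strong convergence $(\bar\rho^\eps)^2\to\bar\rho^2$ in $L^1(L^1)$, identifies $w_1+w_2=\bar\rho\na\bar\rho$ in $\mathcal{D}'$. Eliminating $w_1=\bar\rho\na\bar\rho-w_2=\bar\rho\na\bar\rho+k_2^{-1}J_2$ and inserting $J_1=-k_1 w_1$ into $\pa_t\bar\rho=-\diver(J_1+J_2)$ yields, after regrouping with $\bar\rho=\rho_1+\rho_2$, exactly the claimed equation. The initial condition is recovered from $\pa_t\bar\rho\in L^2(0,T;(W^{1,4}(\T^d))')$, which gives $\bar\rho\in C([0,T];(W^{1,4}(\T^d))')$ and hence $\bar\rho(0)=\rho_1^0+\rho_2^0$ in the dual sense.
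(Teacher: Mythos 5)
Your proposal is correct and follows essentially the same route as the paper: the same $\eps$-uniform energy/entropy bounds, Aubin--Lions for $\bar\rho^\eps$ via $\pa_t\bar\rho^\eps\in L^2(0,T;W^{1,4}(\T^d)')$, the factorization $\na\sqrt{\rho_i^\eps}=2(\rho_i^\eps)^{1/4}\na(\rho_i^\eps)^{1/4}$ to handle the critical Korteweg term, and the same elimination of $J_1$ through the limiting momentum balance. The only (harmless) deviations are that you pass to the limit in each species' momentum equation separately via a distributional limit $w_i$ of $\rho_i^\eps\na\bar\rho^\eps$ and then identify only $w_1+w_2=\bar\rho\na\bar\rho$, whereas the paper takes the limit directly in the sum over $i$ (equivalent, since your relation gives $w_i=-k_i^{-1}J_i$), and that your advertised gain of $\sqrt{\eps}$ in the Korteweg cross term is actually $\eps^{1/4}$ (pairing $\sqrt{\eps}\,\Delta\sqrt{\rho_i^\eps}$, $(\rho_i^\eps)^{1/4}$, and $\eps^{1/4}\na(\rho_i^\eps)^{1/4}$ leaves only an $\eps^{1/4}$ prefactor), which is still amply sufficient for the limit.
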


The proof is based on the energy and entropy estimates proved in Theorem \ref{thm.ex} and on compactness arguments. If $k_1=k_2$, we can prove a stronger result, using the relative entropy method. Indeed, as explained in Section \ref{sec.ideas}, we are able to prove that not only $\rho_i^\eps u_i^\eps\rightharpoonup J_i$ weakly in $L^2(0,T;L^{4/3}(\T^d))$ (which follows from the energy inequality) but also $\rho_i^\eps u_i^\eps \rightharpoonup \rho_i\bar{u}$ weakly in $L^1(0,T;L^1(\T^d))$  
 (which follows from the relative entropy inequality). This allows us to identify $J_i=\rho_i\bar{u}=-\rho_i\na\bar\rho$.

\begin{theorem}[Limit $\eps\to 0$, $k_1=k_2$]\label{thm.eps2}
Let $k_1=k_2=1$, let $(\rho^\eps,u^\eps)$ be a weak solution to \eqref{1.mass}--\eqref{1.ic} as constructed in Theorem \ref{thm.ex}, and let $(\bar\rho,\bar{u})$ be the unique smooth solution to
\begin{align*}
  \pa_t\bar\rho + \diver(\bar\rho\bar{u}) = 0, \quad
  \bar{u} = -\na\bar\rho\quad\mbox{in }\T^d,\ t>0,
\end{align*}
with initial conditions $\bar\rho(0)=\rho_1^0+\rho_2^0$ (this requires smooth positive initial data). Then, as $\eps\to 0$,
\begin{align*}
  \rho_1^\eps+\rho_2^\eps \to \bar\rho &\quad\mbox{strongly in }
  L^2(0,T;L^2(\T^d)), \\
  \rho_i^\eps u_i^\eps \rightharpoonup \rho_i\bar{u} & \quad\mbox{weakly in }
      L^2(0,T;L^{4/3}(\T^d)).
\end{align*}
and the $L^2(\T^d)$-weak limit $\rho_i$ of $(\rho_i^\eps)$ solves the transport equation
\begin{align*}
  \pa_t\rho_i - \diver(\rho_i\na(\rho_1+\rho_2)) = 0 \quad
  \mbox{in }\T^d,\ t>0,\ i=1,2.
\end{align*}

\end{theorem}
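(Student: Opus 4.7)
The strategy is the relative energy method of Lattanzio--Tzavaras, comparing each of the two weak species-fluids $(\rho_i^\eps,u_i^\eps)$ with the common smooth limiting pair $(\bar\rho,\bar u)$, where $\bar u=-\na\bar\rho$ and $\bar\rho$ satisfies the quadratic porous-medium equation \eqref{1.lim1}. The functional to be monitored is the modulated energy
\[
E_R(\rho^\eps,u^\eps|\bar\rho,\bar u)
= \int_{\T^d}\bigg\{\tfrac12(\bar\rho^\eps-\bar\rho)^2 + \eps\sum_{i=1}^2\bigg(\tfrac12\rho_i^\eps|u_i^\eps-\bar u|^2 + |\na\sqrt{\rho_i^\eps}|^2\bigg)\bigg\}dx,
\]
which combines the $L^2$ distance between the total densities with the $\eps$-weighted kinetic and Bohm--Korteweg contributions required to control the oscillations of the momenta and the capillarity.

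First I would compute $dE_R/dt$, justifying the formal calculation on the approximate solutions of Section \ref{sec.ent}. The $L^2$ piece, expanded using $\pa_t\bar\rho^\eps=-\diver(\rho_1^\eps u_1^\eps+\rho_2^\eps u_2^\eps)$ from the two mass equations and $\pa_t\bar\rho=\diver(\bar\rho\na\bar\rho)$ (here the hypothesis $k_1=k_2=1$ is essential to collapse $k_1\rho_1+k_2\rho_2$ in \eqref{1.lim1} to $\bar\rho$), produces after integration by parts a term that will combine with the momentum contribution. The $\eps$-weighted modulated kinetic and Korteweg terms are expanded by testing the weak momentum formulation \eqref{1.momw} against $\bar u$ and the mass formulation \eqref{1.massw} against $\tfrac12|\bar u|^2$; the pieces that do not involve $\bar u$ are already dissipated thanks to the energy inequality \eqref{1.Eineq}. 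The relaxation contribution $-k_i^{-1}\rho_i^\eps u_i^\eps=-\rho_i^\eps u_i^\eps$, combined with the force $-\rho_i^\eps\na\bar\rho^\eps$ and the modulation against $\bar u=-\na\bar\rho$, yields after algebraic rearrangement the positive frictional dissipation $\sum_i\int_{\T^d}\rho_i^\eps|u_i^\eps-\bar u|^2dx$ on the left-hand side.

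The main technical obstacle will be controlling the remainder $R^\eps$ produced by the $\eps$-scaled terms in the momentum balance---the convection $\eps\diver(\rho_i^\eps u_i^\eps\otimes u_i^\eps)$, the viscosity $\eps\diver(\rho_i^\eps\na u_i^\eps)$, the Bohm term $\eps\rho_i^\eps\na(\Delta\sqrt{\rho_i^\eps}/\sqrt{\rho_i^\eps})$, and the drag forces $\eps u_i^\eps+\eps\rho_i^\eps|u_i^\eps|^2u_i^\eps$---when tested against the smooth $\bar u$ and its derivatives. Each of these can be estimated by the $\eps$-uniform bounds of Theorem \ref{thm.ex}: the $L^\infty_tL^2_x$ control of $\sqrt{\rho_i^\eps}u_i^\eps$ and $\sqrt{\rho_i^\eps}$, the $L^2_tL^2_x$ control of $\sqrt{\rho_i^\eps}\na u_i^\eps$ and of $\Delta\sqrt{\rho_i^\eps}$ (via \eqref{1.Hineq} and \eqref{1.ineq}), and the $L^4_tL^4_x$ control of $\sqrt[4]{\rho_i^\eps}u_i^\eps$. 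Careful H\"older and Sobolev interpolation, using smoothness of $\bar u$, gives $|R^\eps|\le C(\bar\rho,\bar u)\sqrt[4]{\eps}$, the $\eps^{1/4}$ rate emerging from balancing the $\eps^{1/2}$ prefactors of the kinetic bounds against their $L^1_t$ norms. Since $E_R|_{t=0}=O(\eps)$ thanks to the smoothness of the limiting initial datum and the matching of initial densities, integrating the resulting differential inequality yields $E_R(t)\le C\sqrt[4]{\eps}$ uniformly on $[0,T]$.

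From this bound two consequences follow. First, $\|\bar\rho^\eps-\bar\rho\|_{L^\infty(0,T;L^2(\T^d))}^2\le C\sqrt[4]{\eps}$, which gives the strong convergence $\bar\rho^\eps\to\bar\rho$ in $L^2(0,T;L^2(\T^d))$. Second, $\sqrt{\rho_i^\eps}(u_i^\eps-\bar u)\to 0$ strongly in $L^2(0,T;L^2(\T^d))$; writing $\rho_i^\eps u_i^\eps=\sqrt{\rho_i^\eps}\cdot\sqrt{\rho_i^\eps}(u_i^\eps-\bar u)+\rho_i^\eps\bar u$ and using $\rho_i^\eps\rightharpoonup\rho_i$ weakly in $L^\infty(0,T;L^2(\T^d))$ together with the smoothness of $\bar u$, the product converges to $\rho_i\bar u=-\rho_i\na\bar\rho$ weakly in $L^2(0,T;L^{4/3}(\T^d))$. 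Passing to the limit in the weak mass formulation \eqref{1.massw} then identifies the transport equation $\pa_t\rho_i-\diver(\rho_i\na(\rho_1+\rho_2))=0$, completing the proof.
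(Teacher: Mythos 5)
Your proposal is correct and follows essentially the same route as the paper: the relative energy functional you monitor is exactly the paper's $E_R$, the inequality $E_R(t)+\int_0^t\sum_i\int_{\T^d}\rho_i^\eps|u_i^\eps-\bar u|^2\,dx\,ds\le C\sqrt[4]{\eps}+C\int_0^t E_R\,ds$ with Gronwall is the paper's Lemma \ref{lem.rel}, obtained there too by testing the mass and momentum formulations against $\bar\rho-\eps|\bar u|^2/2$ and $\bar u$ and estimating the $\eps$-scaled remainders with the bounds \eqref{3.unifeps}. The only cosmetic difference is that you identify the limit of $\rho_i^\eps u_i^\eps$ directly in $L^2(0,T;L^{4/3}(\T^d))$ via the splitting $\rho_i^\eps u_i^\eps=\sqrt{\rho_i^\eps}\,\sqrt{\rho_i^\eps}(u_i^\eps-\bar u)+\rho_i^\eps\bar u$, whereas the paper first gets weak $L^1$ convergence and then matches it with the weak $L^2_tL^{4/3}_x$ limit $J_i$; both are valid.
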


\begin{remark}[Generalizations]\label{rem.gener}\rm
Our results can be extended in various directions. First, all results are valid for more than two species, and it is sufficient to replace the sum over $i=1,2$ by $i=1,\ldots,n$, where $n\in\N$ is arbitrary. Second, one may try more general Korteweg functions $\kappa(\rho_i)$. A simple choice is $\kappa(\rho_i)=1$, giving the higher-order regularization $K=\eps\rho_i\na\Delta\rho_i$, which equals the flux of the thin-film equation. The Korteweg energy density becomes $\eps|\na\rho_i|^2$ and the entropy production simplifies to $\eps\int_{\T^d}(\Delta\rho_i)^2 dx$, thus providing $H^2(\T^d)$ bounds for $\rho_i$. Since we do not need inequalities like \eqref{1.ineq} in this case, we may allow for bounded domains instead of the torus. However, one needs to check whether this regularization is sufficient to pass to the limit $\eps\to 0$ in the regularizing terms, as such a choice does not provide gradient bounds for $\sqrt{\rho_i}$ or $\sqrt[4]{\rho_i}$. We leave this question to future works. Third, we can derive the generalized Busenberg--Travis system \eqref{1.gBT} if the matrix $(a_{ij})$ is symmetric and positive definite. This system is fully parabolic, which simplifies the asymptotic analysis $\eps\to 0$. Indeed, we set in \eqref{1.mom} $k_i=1$ and replace the term $-\rho_i\na(\rho_1+\rho_2)$ by $\rho_i\na p_i(u)$, where $p_i(u)$ is defined in \eqref{1.gBT}. Then, using the test function $\na\log\rho_i$ in the weak formulation of \eqref{1.mom} and summing over $i=1,\ldots,n$ to derive the entropy inequality, we find that
\begin{align*}
  -\sum_{i=1}^n\int_{\T^d}\rho_i\na p_i(u)\cdot\na\log\rho_i dx
  = -\sum_{i,j=1}^n\int_{\T^d}a_{ij}\na\rho_i\cdot\na\rho_j dx
  \le -\alpha\sum_{i=1}^n\int_{\T^d}|\na\rho_i|^2 dx,
\end{align*}
where $\alpha>0$ is the smallest eigenvalue of $(a_{ij})$. This provides uniform gradient bounds for each $\rho_i$, and the Aubin--Lions compactness lemma implies the strong convergence of the approximating sequence of $\rho_i$, thus allowing us to perform the limit $\eps\to 0$.
\qed\end{remark}

The paper is organized as follows. We prove the existence of global weak solutions (Theorem \ref{thm.ex}) in Section \ref{sec.ex}. The limit $\eps\to 0$ for general $k_1$, $k_2>0$ (Theorem \ref{thm.eps}) is shown in Section \ref{sec.eps}, while Section \ref{sec.eps2} is devoted to the proof of Theorem \ref{thm.eps2} in the special case $k_1=k_2$.


\section{Proof of Theorem \ref{thm.ex}: existence of solutions}
\label{sec.ex}

We show first the existence of solutions locally in time and then derive uniform estimates from the energy \eqref{1.E} and entropy \eqref{1.H}, which allows us to extend the local solutions globally.

\subsection{Local existence of solutions}\label{sec.loc}

The local-in-time existence of solutions can be proven by the Faedo--Galerkin method; see \cite[Chap.~7]{Fei04} for the compressible Navier--Stokes equations and \cite{Jue10} for the quantum Navier--Stokes equations. Since the proof is very similar to these works, we only sketch it.

We regularize the initial data by taking $(\rho^0,u^0)\in C^\infty(\T^d;\R^4)$ such that $\rho_i^0\ge c>0$ for some $c>0$ ($i=1,2$). This is possible by using some mollifier with parameter $\delta>0$, proving the result for this initial datum and then passing to the limit $\delta\to 0$. Let $T>0$, let $(e_k)$ be an orthonormal basis of $L^2(\T^d)$ which is also an orthogonal basis of $H^1(\T^d)$, and set $X_N=\operatorname{span}\{e_1,\ldots,e_N\}$. Let the velocity $u=(u_1,u_2)\in C^0([0,T];X_N^2)$ be given and solve the approximate equations
\begin{align}\label{2.mass}
  \pa_t \rho^N_i + \diver(\rho^N_i u_i) = \delta\Delta\rho^N_i, \quad \rho^N_i(0)=\rho_i^0\quad\mbox{in }\T^d\times(0,T),\ i=1,2.
\end{align}
The maximum principle provides the lower and upper bounds $0<r_i\le\rho_i^N\le R_i$ in $\T^d\times(0,T)$, $i=1,2$, where $r_i$ and $R_i$ depend on $\delta$ and the $L^\infty(\T^d)$ norm of $\diver u_i$, and $r_i$ additionally depends on the lower bound $c>0$ of the initial data. We introduce the operator $S:C^0([0,T];X_N^2)\to C^0([0,T];C^3(\T^d;\R^2))$ by $S(u)=\rho^N=(\rho^N_1,\rho^N_2)$. This operator is Lipschitz continuous.

Next, we solve the momentum equation on the space $X_N^2$. We are looking for a solution $u^N=(u_1^N,u_2^N)\in C^0([0,T];X_N^2)$ such that for any $\psi\in C^1([0,T];X_N^2)$ with $\psi(T)=0$ and $i=1,2$,
\begin{align}
  -\eps&\int_{\T^d}\rho_i^0 u_i^0\cdot\psi(0)dx
  = \eps\int_0^T\int_{\T^d}\bigg\{\rho^N_i u_i^N\cdot\pa_t\psi
  + \rho_i^N(u_i^N\otimes u_i^N):\na\psi \nonumber \\
  &+ \rho_i^N\na\bigg(\frac{\Delta(\rho_i^N)^{1/2}}{(\rho_i^N)^{1/2}}
  \bigg)\cdot\psi - \rho_i^N\na u_i^N:\na\psi - u_i^N\cdot\psi
  - \rho_i^N|u_i^N|^2u_i^N\cdot\psi\bigg\}dxdt \label{2.inteq} \\
  &- \int_0^T\int_{\T^d}\bigg(\frac{1}{k_i}\rho_i^Nu_i^N
  + \rho_i^N\na(\rho_1^N+\rho_2^N)\bigg)\cdot\psi dxdt
  - \delta\eps\int_0^T\int_{\T^d}(u_i^N\otimes\na\rho_i^N):\na\psi dxdt.
  \nonumber
\end{align}
As already mentioned in the introduction, the additional term $\delta\eps\diver(u_i^N\otimes\na\rho_i^N)$ is introduced to deal with the term $\delta\Delta\rho_i^N$ in \eqref{2.mass} when deriving the energy estimates; see the proof of Lemma \ref{lem.EN}.
To solve problem \eqref{2.inteq}, we introduce the operator family
\begin{align*}
  \mathcal{M}[\eta]:X_N\to X_N', \quad
  \langle \mathcal{M}[\eta]u,w\rangle = \int_{\T^d}\eta u\cdot w dx,
\end{align*}
where $\eta\in L^1(\T^d)$ satisfies $\eta\ge r:=\min\{r_1,r_2\}>0$ and $u,w\in X_N$. The operator $\mathcal{M}$ is invertible and $\mathcal{M}^{-1}$ is Lipschitz continuous as a function from $L^1(\T^d)$ to the space of bounded linear mappings $X_N'\to X_N$.

We can rephrase the integral equation \eqref{2.inteq} as an ordinary differential equation on $X_N$,
\begin{align}\label{2.ode}
  \frac{d}{dt}\big(\mathcal{M}[\rho^N_i]u^N_i\big)
  = \mathcal{N}[u,u^N], \quad
  \mathcal{M}[\rho^0_i]u_i^N(0) = \mathcal{M}[\rho_i^0]u_i^0,
\end{align}
where $\rho^N=S(u)$ and
\begin{align*}
  \mathcal{N}[u,u^N] &= -\diver(\rho^N_i u\otimes u_i^N)
  + \diver(\rho_i^N\na u_i^N)
  + \rho_i^N\na\bigg(\frac{\Delta(\rho_i^N)^{1/2}}{(\rho_i^N)^{1/2}}
  \bigg) \\
  &\phantom{xx}- u_i^N
     - \rho_i^N|u_i|^2u_i^N
  - \eps^{-1}\big(k_i^{-1}\rho_i^Nu_i^N
  + \rho_i^N\na(\rho_1^N+\rho_2^N)\big)
  + \delta\diver(u_i^N\otimes\na\rho_i^N).
\end{align*}
By standard theory for systems of ordinary differential equations, for given $u\in C^0([0,T];$ $X_N^2)$, there exists a unique solution $u^N\in C^1([0,T];X_N^2)$. We are looking for a fixed point $u=u^N$ of \eqref{2.ode}, and the fixed-point equation can be written in integrated form as
\begin{align*}
  u_i^N(t) = \mathcal{M}^{-1}[S(u^N)_i(t)]
  \bigg(\mathcal{M}[\rho_i^0]u_i^0
  + \int_0^t\mathcal{N}[u^N,u^N]ds\bigg)\quad\mbox{in }X_N
\end{align*}
and using the Lipschitz continuity of $S$, $\mathcal{M}^{-1}$, and
$\mathcal{N}$, we can apply Banach's fixed-point theorem on a short time interval $[0,T^*]$ for some $0<T^*\le T$ in the space $C^0([0,T^*];X_N^2)$.

Before, we proceed with the uniform estimates, we recall the following classical lemma, which is used several times in this work.

\begin{lemma}\label{lem.weak}
Let $v$ be a weak solution to $\pa_t v + \diver F = g$ in $\T^d$, $t>0$, for some integrable functions $F$ and $g$ in the sense of
\begin{align*}
  0 = -\int_0^T\int_{\T^d}v\pa_t\chi dxdt
  - \int_{\T^d}v(0)\chi(0)dx
  - \int_0^T\int_{\T^d}(F\cdot\na\chi + g\chi)dxdt
\end{align*}
for functions $\chi\in C_0^\infty(\T^d\times[0,T))$. Then, for any $t\in[0,T]$ and $\phi\in C^\infty(\T^d\times[0,T])$,
\begin{align*}
  0 = -\int_0^t\int_{\T^d}v\pa_s\phi dxds
  + \int_{\T^d}v\phi\Big|^{s=t}_{s=0}dx
  - \int_0^t\int_{\T^d}(F\cdot\na\phi + g\phi)dxds.
\end{align*}
\end{lemma}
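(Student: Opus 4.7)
The plan is to derive the desired identity by plugging a sharply truncated test function into the given weak formulation and passing to the limit. More precisely, fix $t\in[0,T]$, let $\sigma>0$ be small, and choose a smooth cutoff $\eta_\sigma:[0,T]\to[0,1]$ such that $\eta_\sigma\equiv 1$ on $[0,t-\sigma]$, $\eta_\sigma\equiv 0$ on $[t,T]$ (interpreted as $\eta_\sigma\equiv 0$ only at $s=T$ when $t=T$, with transition on $[T-\sigma,T]$). Then $\chi(x,s)=\phi(x,s)\eta_\sigma(s)$ belongs to $C_0^\infty(\T^d\times[0,T))$ and is an admissible test function in the given weak formulation.

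Before taking the limit, I would establish that $v$ admits a continuous-in-time representative. From the equation $\pa_t v=g-\diver F$ with $F,g\in L^1(\T^d\times(0,T))$, one obtains $\pa_t v\in L^1(0,T;H^{-s}(\T^d))$ for $s>d/2+1$. Together with $v\in L^1(\T^d\times(0,T))$, this yields $v\in W^{1,1}(0,T;H^{-s}(\T^d))\hookrightarrow C^0([0,T];H^{-s}(\T^d))$. Since $\phi(\cdot,s)\in C^\infty(\T^d)\hookrightarrow H^s(\T^d)$, the map $s\mapsto\int_{\T^d}v(s)\phi(\cdot,s)dx$ is well defined and continuous on $[0,T]$. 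In particular, the pointwise evaluations $v(t)$ and $v(0)$ appearing in the conclusion are meaningful.

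Substituting $\chi=\phi\eta_\sigma$ into the weak formulation and expanding $\pa_t\chi=\pa_s\phi\cdot\eta_\sigma+\phi\cdot\eta_\sigma'$ gives
\begin{align*}
0 &= -\int_0^T\!\!\int_{\T^d}v\,\pa_s\phi\cdot\eta_\sigma\,dxds
    -\int_0^T\!\!\int_{\T^d}v\,\phi\cdot\eta_\sigma'\,dxds \\
  &\quad -\int_{\T^d}v(0)\phi(0)\,dx
    -\int_0^T\!\!\int_{\T^d}(F\cdot\na\phi+g\phi)\eta_\sigma\,dxds,
\end{align*}
where I used $\eta_\sigma(0)=1$ (assuming $\sigma<t$; the case of small $t$ is analogous). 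Sending $\sigma\to 0$, the terms carrying $\eta_\sigma$ converge by dominated convergence to the corresponding integrals over $(0,t)$. For the crucial $\eta_\sigma'$-term, observe that $-\eta_\sigma'\ge 0$ has total mass $1$ and concentrates at $s=t$; combined with the continuity of $s\mapsto\int_{\T^d}v(s)\phi(\cdot,s)dx$ on $[0,T]$, this gives
\begin{align*}
  -\int_0^T\!\!\int_{\T^d}v\,\phi\cdot\eta_\sigma'\,dxds
  \ \longrightarrow\ \int_{\T^d}v(t)\phi(t)\,dx.
\end{align*}
Rearranging yields the claimed identity.

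The main technical point is establishing the continuous-in-time representative of $v$ so that $v(t)$ makes sense for every $t\in[0,T]$; this is a standard consequence of the PDE together with the integrability of $F$ and $g$, and is what allows the approximation argument to pass to the limit at the chosen time slice.
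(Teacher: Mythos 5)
The paper does not prove this lemma at all; it is merely recalled as classical, so there is no in-paper argument to compare against. Your proof is the standard one and is essentially correct: truncating in time with $\chi=\phi\,\eta_\sigma$, using the weak formulation, and letting the mass of $-\eta_\sigma'$ concentrate at $s=t$ is exactly the right mechanism, and your preliminary step ($\pa_t v=g-\diver F$ gives $\pa_t v\in L^1(0,T;H^{-s}(\T^d))$ for $s>d/2+1$, hence $v\in C^0([0,T];H^{-s}(\T^d))$ after redefinition on a null set, so $s\mapsto\int_{\T^d}v(s)\phi(\cdot,s)\,dx$ is continuous) is precisely what justifies the passage to the limit in the $\eta_\sigma'$-term. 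Two small points deserve cleaning up. First, for $t=T$ your cutoff vanishing only at the single point $s=T$ is not an admissible element of $C_0^\infty(\T^d\times[0,T))$ (compact support in $[0,T)$ requires vanishing on a neighbourhood of $T$); the easy fix is to prove the identity for all $t<T$ and then let $t\to T^-$, using the continuity of $s\mapsto\int_{\T^d}v(s)\phi(\cdot,s)\,dx$ together with dominated convergence for the time integrals. Second, note that the term $\int_{\T^d}v(0)\phi(0)\,dx$ in your limit identity is the datum from the hypothesis, while $v(t)$ is the continuous representative; if one wants the formula literally as stated (with $v\phi\big|_{s=0}^{s=t}$ referring to one and the same function), one should add the one-line observation that letting $t\to 0^+$ in the derived identity identifies the continuous representative at $s=0$ with that datum.
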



\subsection{Approximate energy inequality}\label{sec.ener}

To prove the global existence of solutions, it is sufficient to show that the sequence $(u^N(t))$ is bounded in $X_N^2$ for $t\in[0,T^*]$ uniformly in $T^*$.

\begin{lemma}[Energy inequality]\label{lem.EN}
Let $(\rho^N,u^N)$ be the local solution to \eqref{2.mass}--\eqref{2.inteq} constructed in Section \ref{sec.loc}. Then
\begin{align*}
  \frac{dE}{dt}&(\rho^N,u^N)
  + \sum_{i=1}^2k_i^{-1}\int_{\T^d}\rho_i^N|u_i^N|^2 dx
  + \eps\sum_{i=1}^2\int_{\T^d}\big(|u_i^N|^2 + \rho_i^N|\na u_i^N|^2
  + \rho_i^N|u_i^N|^4\big)dx \\
  &+ \delta\int_{\T^d}| \na(\rho_1^N+\rho_2^N)|^2dx
  + \frac{\delta\eps}{2}\sum_{i=1}^2\int_{\T^d}
  \rho_i^N|D^2\log\rho_i^N|^2 dx = 0,
\end{align*}
recalling Definition \eqref{1.E} of $E(\rho^N,u^N)$.
\end{lemma}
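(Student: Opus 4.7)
The plan is to derive the identity by testing the momentum equation \eqref{2.inteq} with $\psi = u_i^N$ and combining the resulting time-pointwise balance (summed over $i=1,2$) with the mass equation \eqref{2.mass} multiplied by $\rho_1^N+\rho_2^N$ and with a time-derivative formula for the Korteweg density $|\na\sqrt{\rho_i^N}|^2$. At the Galerkin level $u_i^N\in C^1([0,T^*];X_N^2)$ and $\rho_i^N\in C^0([0,T^*];C^3(\T^d))$ with $0<r_i\le\rho_i^N\le R_i$, so the test $\psi=u_i^N$ is admissible, all reciprocals and fractional powers are well-defined, and the integrations by parts below are legitimate; Lemma \ref{lem.weak} justifies the pointwise-in-time evaluation.

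On the kinetic side, integration by parts together with \eqref{2.mass} gives
\[
\eps\int_{\T^d}\bigl[\pa_t(\rho_i^N u_i^N)+\diver(\rho_i^N u_i^N\otimes u_i^N)\bigr]\cdot u_i^N\,dx = \eps\frac{d}{dt}\int_{\T^d}\frac{\rho_i^N|u_i^N|^2}{2}\,dx + \frac{\delta\eps}{2}\int_{\T^d}\Delta\rho_i^N|u_i^N|^2\,dx.
\]
A further integration by parts rewrites the last summand as $-\delta\eps\int(u_i^N\otimes\na\rho_i^N):\na u_i^N\,dx$, which is cancelled exactly by the contribution of the artificial regularization $\delta\eps\diver(u_i^N\otimes\na\rho_i^N)$ in \eqref{2.inteq}; this is the sole purpose of that term. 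The viscous and drag summands produce directly the dissipative contributions $\eps\rho_i^N|\na u_i^N|^2$, $\eps|u_i^N|^2$, $\eps\rho_i^N|u_i^N|^4$, and $k_i^{-1}\rho_i^N|u_i^N|^2$.

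To generate the quadratic potential, I would test \eqref{2.mass} with $\rho_1^N+\rho_2^N$, obtaining
\[
\sum_{i=1}^2\int_{\T^d}\rho_i^N u_i^N\cdot\na(\rho_1^N+\rho_2^N)\,dx = \frac{d}{dt}\int_{\T^d}\tfrac12(\rho_1^N+\rho_2^N)^2\,dx + \delta\int_{\T^d}|\na(\rho_1^N+\rho_2^N)|^2\,dx.
\]
The left-hand side is exactly the sum over $i$ of the pressure-work arising from testing the force $-\rho_i^N\na(\rho_1^N+\rho_2^N)$ against $u_i^N$, so substituting replaces it by the potential-energy derivative and by $\delta\int|\na(\rho_1^N+\rho_2^N)|^2\,dx$. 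For the Bohm term, integration by parts together with \eqref{2.mass} in the form $\diver(\rho_i^N u_i^N)=\delta\Delta\rho_i^N-\pa_t\rho_i^N$ yields
\[
\int_{\T^d}\rho_i^N u_i^N\cdot\na\!\left(\frac{\Delta\sqrt{\rho_i^N}}{\sqrt{\rho_i^N}}\right)dx = -\frac{d}{dt}\int_{\T^d}|\na\sqrt{\rho_i^N}|^2\,dx - \delta\int_{\T^d}\frac{\Delta\sqrt{\rho_i^N}}{\sqrt{\rho_i^N}}\Delta\rho_i^N\,dx;
\]
the first summand supplies the Korteweg energy derivative.

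The remaining term must be converted into the $\rho|D^2\log\rho|^2$ form. Setting $s=\sqrt{\rho_i^N}$, using $\Delta\rho=2s\Delta s+2|\na s|^2$ and the pointwise identity $\rho|D^2\log\rho|^2 = 4|D^2 s|^2 - 8 s^{-1} D^2 s(\na s,\na s) + 4 s^{-2}|\na s|^4$, together with the periodic identity $\int_{\T^d}(\Delta s)^2\,dx = \int_{\T^d}|D^2 s|^2\,dx$, one obtains
\[
\int_{\T^d}\frac{\Delta\sqrt{\rho_i^N}}{\sqrt{\rho_i^N}}\Delta\rho_i^N\,dx = \tfrac12\int_{\T^d}\rho_i^N|D^2\log\rho_i^N|^2\,dx,
\]
which accounts for the second $\delta\eps$-contribution in the stated identity. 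Assembling the three energy derivatives, the positive productions and the two $\delta$-terms then completes the balance. The main obstacle is the accurate bookkeeping required so that the four $\delta$-artifacts generated by \eqref{2.mass} recombine exactly into $\delta\int|\na(\rho_1^N+\rho_2^N)|^2\,dx$ and $(\delta\eps/2)\int\rho_i^N|D^2\log\rho_i^N|^2\,dx$ rather than into residual terms; it is here that the choice $\kappa(\rho)=1/(2\rho)$, which produces the Bohm form of the Korteweg term, becomes essential.
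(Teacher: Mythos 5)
Your proposal is correct and follows essentially the same route as the paper: testing the momentum equation with $\psi=u_i^N$, generating the potential and Korteweg energy derivatives from the regularized mass equation, cancelling the kinetic $\delta$-artifact against the $\delta\eps\diver(u_i^N\otimes\na\rho_i^N)$ term, and converting the remaining $\delta\eps$-term via the identity \eqref{2.second}. The only cosmetic difference is that the paper bundles your three mass-equation multipliers into the single test function $\phi=\eps|u_i^N|^2/2-\eps\Delta(\rho_i^N)^{1/2}/(\rho_i^N)^{1/2}+(\rho_1^N+\rho_2^N)$, whereas you treat them one at a time.
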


Since the lower bound $\rho_i^N\ge r_i>0$ yields a uniform estimate for $(u_i^N)$ in $L^2(\T^d)$ uniformly in time, thanks to the definition of the energy \eqref{1.E}, we obtain the desired estimate for $u^N$.

\begin{proof}
We use the test function $\phi= \eps|u_i^N|^2/2-\eps\Delta(\rho_i^N)^{1/2}/(\rho_i^N)^{1/2}+(\rho_1^N+\rho_2^N)$ in the weak formulation of \eqref{2.mass} and the test function $\psi=u_i^N$ in the time-differentiated form of \eqref{2.inteq} (see Lemma \ref{lem.weak}) and add both equations. Then, using the identity $\pa_t(\rho_i^N u_i^N)+\diver(\rho_i^N u_i^N\otimes u_i^N) = \rho_i^N(\pa_t u_i^N+u_i^N\cdot\na u_i^N) + \delta \Delta \rho_i^N u_i^N$ and proceeding as in the proof of \cite[Lemma 3.1]{Jue10}, some terms cancel, and we end up with
\begin{align*}
  \eps\frac{d}{dt}&\int_{\T^d}\big(\rho_i^N|u_i^N|^2
  + |\na(\rho_i^N)^{1/2}|^2\big)dx
  + \int_{\T^d}\pa_t\rho_i^N(\rho_1^N+\rho_2^N)dx \\
  &= -\int_{\T^d}\big(k_i^{-1}\rho_i^N|u_i^N|^2
  + \eps(|u_i^N|^2 + \rho_i^N|\na u_i^N|^2
  + \rho_i^N|u_i^N|^4)\big)dx \\
  &\phantom{xx}- \delta\eps\int_{\T^d}\Delta\rho_i^N
  \frac{\Delta(\rho_i^N)^{1/2}}{(\rho_i^N)^{1/2}}dx
  - \delta\int_{\T^d}\na\rho_i^N\cdot\na(\rho_1^N+\rho_2^N)dx.
\end{align*}
Adding these equations for $i=1,2$ and using
\begin{align}\label{2.second}
  \int_{\T^d}\Delta\rho_i^N\frac{\Delta(\rho_i^N)^{1/2}}{(\rho_i^N)^{1/2}}
  dx = \frac12\int_{\T^d}\rho_i^N|D^2\log \rho_i^N|^2 dx
\end{align}
(see \cite[(3.7)]{Jue10}), we find that
\begin{align*}
  \frac{d}{dt}&\int_{\T^d}\bigg(\eps\sum_{i=1}^2\big(\rho_i^N|u_i^N|^2
  + |\na(\rho_i^N)^{1/2}|^2\big) + \frac12(\rho_1+\rho_2)^2\bigg)dx \\
  &+ \sum_{i=1}^2\int_{\T^d}\big(k_i^{-1}\rho_i^N|u_i^N|^2
  + \eps(|u_i^N|^2 + \rho_i^N|\na u_i^N|^2
  + \rho_i^N|u_i^N|^4)\big)dx \\
  &+ \delta\int_{\T^d}|\na(\rho_1^N+\rho_2^N)|^2dx
  + \frac{\delta\eps}{2}\sum_{i=1}^2\int_{\T^d}\rho_i^N
  |D^2\log \rho_i^N|^2 dx = 0.
\end{align*}
This finishes the proof.
\end{proof}

The energy inequality and inequality \eqref{1.ineq} imply the following bounds.

\begin{corollary}[Uniform estimates I]\label{lem.est1}
Let $(\rho^N,u^N)$ be the local solution to \eqref{2.mass}--\eqref{2.inteq} constructed in Section \ref{sec.loc}. Then there exists $C>0$ independent of $(\delta,\eps,N)$ such that for $i=1,2$,
\begin{align*}
  \|\rho_i^N\|_{L^\infty(0,\infty;L^2(\T^d))}
  + \|(\rho_i^N)^{1/2}u_i^N\|_{L^2(0,\infty;L^2(\T^d))}
  + \sqrt{\eps}\|\na(\rho_i^N)^{1/2}\|_{L^\infty(0,\infty;L^2(\T^d))}
  &\le C, \\
  \sqrt{\eps}\|(\rho_i^N)^{1/2}u_i^N\|_{L^\infty(0,\infty;L^2(\T^d))}
  + \sqrt{\eps}\|(\rho_i^N)^{1/2}\na u_i^N\|_{L^2(0,\infty;L^2(\T^d))}
  &\le C, \\
  \sqrt{\eps}\|u_i^N\|_{L^2(0,\infty;L^2(\T^d))}
  + \sqrt[4]{\eps}\|(\rho_i^N)^{1/4}u_i^N\|_{L^4(0,\infty;L^4(\T^d))}
  &\le C, \\
  \sqrt{\delta\eps}\|(\rho_i^N)^{1/2}D^2\log\rho_i^N\|_{L^2(0,\infty;
  L^2(\T^d))} + \sqrt[4]{\delta\eps}
  \|\na(\rho_i^N)^{1/4}\|_{L^4(0,\infty;L^4(\T^d))}  &\le C.
\end{align*}
\end{corollary}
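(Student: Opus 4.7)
The plan is to read off each bound directly from the energy identity of Lemma \ref{lem.EN} integrated in time, after verifying that the initial energy $E(\rho^0,u^0)$ is bounded uniformly in $(\delta,\eps,N)$. For the mollified initial data constructed in Section \ref{sec.loc}, assumption (A2) gives $\rho_i^0+\rho_2^0\in L^2(\T^d)$, $\sqrt{\rho_i^0}|u_i^0|\in L^2(\T^d)$, and $\na\sqrt{\rho_i^0}\in L^2(\T^d)$, so (choosing the mollification parameter so the mollified data stay bounded in these norms)
\[
  E(\rho^0,u^0)\le C_0
\]
uniformly in $(\delta,\eps,N)$, where the $\eps$-weighted kinetic and Korteweg contributions are harmless because the $\eps$ prefactor only improves boundedness.

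Integrating the equality of Lemma \ref{lem.EN} over $[0,t]$ and dropping the nonnegative terms one at a time therefore yields
\[
  E(\rho^N(t),u^N(t))
  + \sum_{i=1}^2\int_0^t\!\!\int_{\T^d}\Big(k_i^{-1}\rho_i^N|u_i^N|^2
  + \eps|u_i^N|^2 + \eps\rho_i^N|\na u_i^N|^2 + \eps\rho_i^N|u_i^N|^4\Big)dxds
  + \frac{\delta\eps}{2}\sum_{i=1}^2\int_0^t\!\!\int_{\T^d}\rho_i^N|D^2\log\rho_i^N|^2 dxds \le C_0
\]
for every $t>0$. From the potential part $\tfrac12(\rho_1^N+\rho_2^N)^2$ of $E$ and $0\le\rho_i^N\le\rho_1^N+\rho_2^N$ I obtain the $L^\infty(0,\infty;L^2)$ bound on $\rho_i^N$; the kinetic part $\tfrac\eps2\rho_i^N|u_i^N|^2$ produces the $\sqrt\eps$-weighted $L^\infty(0,\infty;L^2)$ estimate for $(\rho_i^N)^{1/2}u_i^N$; and the Korteweg part $\eps|\na(\rho_i^N)^{1/2}|^2$ gives the $\sqrt\eps$-weighted $L^\infty(0,\infty;L^2)$ estimate for $\na(\rho_i^N)^{1/2}$. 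Exactly the same procedure, now applied to the time-integrated dissipation terms, yields the $L^2(L^2)$ estimate for $(\rho_i^N)^{1/2}u_i^N$ (from the friction $k_i^{-1}\rho_i^N|u_i^N|^2$), the $\sqrt\eps$-weighted $L^2(L^2)$ estimates for $u_i^N$ and $(\rho_i^N)^{1/2}\na u_i^N$, the $\sqrt[4]\eps$-weighted $L^4(L^4)$ estimate for $(\rho_i^N)^{1/4}u_i^N$, and the $\sqrt{\delta\eps}$-weighted $L^2(L^2)$ estimate for $(\rho_i^N)^{1/2}D^2\log\rho_i^N$.

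The only bound that is not immediate from the energy identity is the $L^4(L^4)$ estimate for $\na(\rho_i^N)^{1/4}$. Here I invoke inequality \eqref{1.ineq}: for smooth, strictly positive $\rho_i^N$ (which we have at the Galerkin level thanks to the lower bound $\rho_i^N\ge r_i>0$),
\[
  \int_{\T^d}\rho_i^N|D^2\log\rho_i^N|^2 dx \;\ge\; c(d)\int_{\T^d}\big(|\Delta(\rho_i^N)^{1/2}|^2 + |\na(\rho_i^N)^{1/4}|^4\big)dx.
\]
Combined with the $\sqrt{\delta\eps}$-weighted $L^2(L^2)$ estimate of $(\rho_i^N)^{1/2}D^2\log\rho_i^N$ already obtained, this gives the last line of the corollary after taking a fourth root for the quartic term. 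No step is a serious obstacle; the only point requiring care is ensuring that the initial energy of the regularized data is uniformly bounded in the three small parameters, which is guaranteed by (A2) and a standard choice of mollification.
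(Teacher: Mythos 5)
Your proposal is correct and is exactly the argument the paper intends: the corollary is stated as an immediate consequence of integrating the energy identity of Lemma \ref{lem.EN} in time, using the uniform bound on the initial energy guaranteed by (A2) (and the mollification), and invoking inequality \eqref{1.ineq} for the final $L^4(L^4)$ bound on $\na(\rho_i^N)^{1/4}$. No gaps; the only cosmetic slip is the typo ``$\rho_i^0+\rho_2^0$'' for $\rho_1^0+\rho_2^0$.
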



\subsection{Approximate entropy inequality}\label{sec.ent}

Further uniform estimates are derived from the entropy inequality. Here, we work directly with the weak formulation of \eqref{2.mass} and with the weak formulation \eqref{2.inteq}.

\begin{lemma}[Entropy inequality]\label{lem.HN}
Let $(\rho^N,u^N)$ be the local solution to \eqref{2.mass}--\eqref{2.inteq} constructed in Section \ref{sec.loc}. Then
\begin{align*}
  H(\rho^N&(t)) + \int_0^t\int_{\T^d}
  |\na(\rho_1^N+\rho_2^N)|^2 dxds
  + \frac{\eps}{8}\sum_{i=1}^2\int_0^t\int_{\T^d}
  \rho_i^N|D^2\log\rho_i^N|^2 dxds \\
  &+ \sum_{i=1}^2\frac{4\delta}{k_i}\int_0^t\int_{\T^d}
  |\na(\rho_i^N)^{1/2}|^2dxds
  + \delta\eps\sum_{i=1}^2\int_0^t\int_{\T^d}|\na\log\rho_i^N|^2dxds
  \le C,
\end{align*}
and the constant $C>0$ only depends on the initial data.
\end{lemma}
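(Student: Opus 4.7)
The strategy is to test the mass equation \eqref{2.mass} with $k_i^{-1}\log\rho_i^N$ and the strong form of the momentum equation (derived from \eqref{2.inteq}, well defined because at the Galerkin level $\rho_i^N\ge r_i>0$ by the maximum principle and $\rho_i^N$ is smooth by parabolic regularity) with $\na\log\rho_i^N$, then combine the two identities so that the indefinite terms cancel and the resulting $\eps$-remainders can be absorbed into the good dissipation terms.

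Testing \eqref{2.mass} with $k_i^{-1}\log\rho_i^N$ and summing yields
\begin{align*}
\frac{dH}{dt}(\rho^N) + 4\delta\sum_{i=1}^2 k_i^{-1}\int_{\T^d}|\na\sqrt{\rho_i^N}|^2\,dx = \sum_{i=1}^2 k_i^{-1}\int_{\T^d} u_i^N\cdot\na\rho_i^N\,dx,
\end{align*}
whose indefinite right-hand side has to be controlled via the momentum equation. Testing the latter with $\na\log\rho_i^N$ and summing over $i$ yields four crucial contributions: the relaxation term $k_i^{-1}\rho_i^N u_i^N$ reproduces $\sum_i k_i^{-1}\int u_i^N\cdot\na\rho_i^N\,dx$, cancelling the indefinite term; the force $\rho_i^N\na(\rho_1^N+\rho_2^N)$ produces the dissipation $\int|\na(\rho_1^N+\rho_2^N)|^2\,dx$; the Korteweg term gives, via identity \eqref{2.second}, $\tfrac{\eps}{2}\sum_i\int\rho_i^N|D^2\log\rho_i^N|^2\,dx$; and, crucially, the linear drag $\eps u_i^N$ tested against $\na\log\rho_i^N=\na\rho_i^N/\rho_i^N$, after invoking the mass equation and the torus identity $\int\Delta\log\rho_i^N\,dx=0$, simplifies to
\begin{align*}
\eps\int_{\T^d} u_i^N\cdot\na\log\rho_i^N\,dx = -\eps\frac{d}{dt}\int_{\T^d}\log\rho_i^N\,dx + \delta\eps\int_{\T^d}|\na\log\rho_i^N|^2\,dx,
\end{align*}
generating exactly the $\delta\eps$-dissipation claimed in the lemma.

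The right-hand side of the combined identity gathers $\eps$-multiplied remainders from the time derivative, convection, viscous stress, nonlinear drag and the parabolic regularization $\delta\eps\diver(u_i^N\otimes\na\rho_i^N)$. The material-derivative expansion $\pa_t(\rho_i^N u_i^N)+\diver(\rho_i^N u_i^N\otimes u_i^N)=\rho_i^N(\pa_t u_i^N+u_i^N\cdot\na u_i^N)+\delta u_i^N\Delta\rho_i^N$ (via the mass equation) shows that the $\delta u_i^N\Delta\rho_i^N$ contribution cancels with the corresponding part of $\diver(u_i^N\otimes\na\rho_i^N)=(\na\rho_i^N\cdot\na)u_i^N+u_i^N\Delta\rho_i^N$, which is precisely the purpose of the added regularization in \eqref{2.inteq}. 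The surviving pieces are bounded by Cauchy--Schwarz and Young's inequality using the uniform bounds of Corollary \ref{lem.est1}: the viscous stress via $\eps\|\sqrt{\rho_i^N}\na u_i^N\|_{L^2}\|\sqrt{\rho_i^N}D^2\log\rho_i^N\|_{L^2}$; the convective and nonlinear drag terms via the $L^4$-bound $\sqrt[4]\eps\,(\rho_i^N)^{1/4}u_i^N\in L^4(L^4)$ together with the $L^4$-bound on $\na(\rho_i^N)^{1/4}$ from \eqref{1.ineq}; and the time-derivative term by integration by parts in time, which produces $\eps\tfrac{d}{dt}\int u_i^N\cdot\na\rho_i^N\,dx$ (controlled via Cauchy--Schwarz by the energy) plus lower-order remainders. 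Choosing absorption constants whose sum equals $3\eps/8$ reduces the coefficient of $\int\rho_i^N|D^2\log\rho_i^N|^2$ on the LHS from $\eps/2$ to $\eps/8$, as claimed.

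Finally, integrating in time over $(0,t)$ and bounding $\eps\sum_i\int_{\T^d}\log\rho_i^N(t)\,dx\le C$ using mass conservation and the elementary inequality $\log x\le x/e$, together with Assumption (A2) to handle the initial contributions $H(\rho^N(0))$ and $\eps\int\log\rho_i^N(0)\,dx$, yields the stated inequality with an initial-data-only constant. The main obstacle is the delicate bookkeeping required in the absorption step: all constants must remain independent of $(\delta,\eps,N)$, which in particular forces us to avoid the $\delta\eps$-dependent bounds of Corollary \ref{lem.est1} (namely those for $\sqrt{\delta\eps}\|\sqrt{\rho_i^N}D^2\log\rho_i^N\|_{L^2}$ and $\sqrt[4]{\delta\eps}\|\na(\rho_i^N)^{1/4}\|_{L^4}$), since employing them would introduce $\delta$-dependence into the final constant.
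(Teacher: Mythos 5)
Your argument is essentially the paper's proof: test \eqref{2.mass} with $k_i^{-1}\log\rho_i^N$ and \eqref{2.inteq} with $\na\log\rho_i^N$, let the relaxation term cancel the indefinite entropy production, extract the three dissipation terms from the force term, the Korteweg term (via \eqref{2.second}), and the linear drag combined with \eqref{2.mass}, cancel the $\delta\eps\,\Delta\rho_i^N u_i^N$ contribution against part of $\diver(u_i^N\otimes\na\rho_i^N)$, and absorb the remaining $\eps$-remainders using Corollary \ref{lem.est1}, Young's inequality and \eqref{1.ineq}; the treatment of the boundary-in-time $\log$-terms via mass conservation and $\log\rho_i^0\in L^1(\T^d)$ also matches. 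One correction: your closing assertion that the $\sqrt{\delta\eps}$-weighted bounds of Corollary \ref{lem.est1} must be avoided is mistaken, and it leaves implicit precisely the two residual $\delta\eps$-terms that the paper controls with those bounds, namely $-\delta\eps\int_0^t\int_{\T^d}\Delta\rho_i^N\,\diver u_i^N\,dxds$ (arising when $\pa_s\rho_i^N$ is replaced via \eqref{2.mass} after the integration by parts in time) and $\delta\eps\int_0^t\int_{\T^d}\na\log\rho_i^N\cdot\na u_i^N\cdot\na\rho_i^N\,dxds$ (the non-cancelled part of the regularization term). Since those weighted bounds are uniform in $(\delta,\eps,N)$, invoking them only produces harmless positive powers of $\delta$ (a factor $\sqrt{\delta}\le 1$), not a $\delta$-dependence of the final constant; alternatively, both terms can be absorbed by Young's inequality into the $\rho_i^N|D^2\log\rho_i^N|^2$-dissipation via \eqref{1.ineq} at the price of $C\delta^2$, so your scheme closes either way, but as written these two terms are not actually handled.
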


\begin{proof}
Recalling that $\rho_i^N$ is smooth and positive, we compute
\begin{align}\label{2.dHdt}
  H(\rho^N&(t)) - H(\rho^N(0))
  = \sum_{i=1}^2\frac{1}{k_i}\int_0^t\int_{\T^d}\pa_s\rho_i^N
  \log\rho_i^N dxds \\
  &= \sum_{i=1}^2\frac{1}{k_i}\int_0^t\int_{\T^d}\big(\rho_i^N u_i^N
  - \delta\na\rho_i^N\big)\cdot\na\log\rho_i^N dxds \nonumber \\
  &= \sum_{i=1}^2\frac{1}{k_i}\int_0^t\int_{\T^d}u_i^N
  \cdot\na\rho_i^N dxds
  - \sum_{i=1}^2\frac{4\delta}{k_i}\int_0^t\int_{\T^d}
  |\na(\rho_i^N)^{1/2}|^2 dxds. \nonumber
\end{align}
To estimate the first term on the right-hand side, we use the test function $\na\log\rho_i^N$ in \eqref{2.inteq}:
\begin{align}\label{2.I17}
  & \frac{1}{k_i}\int_0^t\int_{\T^d}u_i^N\cdot\na\rho_i^N dxds
  = \frac{1}{k_i}\int_0^t\int_{\T^d}(\rho_i^Nu_i^N)
  \cdot\na\log\rho_i^N dxds = I_1+\cdots+I_7, \\
  & I_1 = -\eps\int_0^t\int_{\T^d}\big(\pa_s(\rho_i^N u_i^N)
  + \diver(\rho_i^N u_i^N\otimes u_i^N)\big)\cdot\na\log\rho_i^N dxds,
  \nonumber \\
  & I_2 = \eps\int_0^t\int_{\T^d}\rho_i^N\na
  \frac{\Delta(\rho_i^N)^{1/2}}{(\rho_i^N)^{1/2}}
  \cdot\na\log\rho_i^N dxds, \nonumber \\
  & I_3 = \eps\int_0^t\int_{\T^d}\diver(\rho_i^N\na u_i^N)
  \cdot\na\log\rho_i^N dxds, \nonumber \\
  & I_4 = -\eps\int_0^t\int_{\T^d}u_i^N\cdot\na\log\rho_i^N dxds,
  \nonumber \\
  & I_5 = -\eps\int_0^t\int_{\T^d}\rho_i^N|u_i^N|^2u_i^N
  \cdot\na\log\rho_i^N dxds,  \nonumber \\
  & I_6 = -\int_0^t\int_{\T^d}\rho_i^N\na(\rho_1^N+\rho_2^N)
  \cdot\na\log\rho_i^N dxds, \nonumber \\
  & I_7 = \delta\eps\int_0^t\int_{\T^d}\diver(u_i^N\otimes\na\rho_i^N)
  \cdot\na\log\rho_i^N dxds. \nonumber
\end{align}

{\em Step 1: Estimation of $I_1$, $I_2$, and $I_7$:} We start with $I_2$. We infer from identity \eqref{2.second} that
\begin{align*}
  I_2 = -\eps\int_0^t\int_{\T^d}\frac{\Delta(\rho_i^N)^{1/2}}{
  (\rho_i^N)^{1/2}}\Delta\rho_i^N dxds
  = -\frac{\eps}{2}\int_0^t\int_{\T^d}\rho_i^N|D^2\log\rho_i^N|^2 dxds,
\end{align*}
and this expression will be used to absorb some integrals coming from the other terms. It follows from \eqref{2.mass} that
\begin{align}\label{2.I1}
  I_1 &= -\eps\int_0^t\int_{\T^d}\big(\rho_i^N(\pa_s u_i^N
  + u_i^N\cdot\na u_i^N) + \delta\Delta\rho_i^N u_i^N\big)
  \cdot\na\log\rho_i^N dxds \\
  &= -\eps\int_0^t\int_{\T^d}\big(\pa_s u_i^N\cdot\na\rho_i^N
  + u_i^N\cdot\na u_i^N\cdot\na\rho_i^N
  + \delta\Delta\rho_i^N u_i^N\cdot\na\log\rho_i^N\big)dxds \nonumber \\
  &=: I_{11}+I_{12}+I_{13}. \nonumber
\end{align}
The term $I_{11}$ is rewritten according to
\begin{align*}
  I_{11} &= -\eps\int_0^t\int_{\T^d}\big(\pa_s(u_i^N\cdot\na\rho_i^N)
  - u_i^N\cdot\na\pa_s\rho_i^N\big) dxds \\
  &= -\eps\int_0^t\frac{d}{ds}
      \int_{\T^d}u_i^N\cdot\na\rho_i^N dxds
  - \eps\int_0^t\int_{\T^d}\diver u_i^N\big(-\diver(\rho_i^N u_i^N)
  + \delta\Delta\rho_i^N\big) dxds \\
  &= -\eps\int_{\T^d}u_i^N(t)\cdot\na\rho_i^N(t) dx
  + \eps\int_{\T^d}u_i^N(0)\cdot\na\rho_i^N(0)dx \\
  &\phantom{xx}+ \eps\int_0^t\int_{\T^d}
  (u_i^N\cdot\na\rho_i^N)\diver u_i^N dxds
  + \eps\int_0^t\int_{\T^d}\rho_i^N(\diver u_i^N)^2 dxds \\
  &\phantom{xx}
  - \delta\eps\int_0^t\int_{\T^d}\Delta\rho_i^N \diver u_i^N dxds
  =: I_{111}+\cdots+I_{115}.
\end{align*}
Corollary \ref{lem.est1} shows that, for $0<t<T$,
\begin{align*}
  I_{111} + I_{112} &= -\eps\int_{\T^d}u_i^N(t)\cdot\na\rho_i^N(t)dx
  + \eps\int_{\T^d}u_i^N(0)\cdot\na\rho_i^N(0)dx \\
  &\le 2\big(\sqrt{\eps}\|(\rho_i^N)^{1/2}u_i^N
  \|_{L^\infty(0,T;L^2(\Omega))}\big)\big(\sqrt{\eps}
  \|\na(\rho_i^N)^{1/2}\|_{L^\infty(0,T;L^2(\T^d))}\big) \\
  &\phantom{xx} + 2\eps\|(\rho_i^0)^{1/2}u_i^0\|_{L^2(\T^d)}
  \|\na(\rho_i^0)^{1/2}\|_{L^2(\T^d)} \le C, \\
  I_{114} &\le \eps\|(\rho_i^N)^{1/2}\na u_i^N\|_{L^2(0,T;L^2(\T^d))}^2
  \le C.
\end{align*}
Furthermore, using H\"older's inequality, Corollary \ref{lem.est1}, and inequality \eqref{1.ineq},
\begin{align*}
  I_{113} &\le 4\big(\sqrt[4]{\eps}\|(\rho_i^N)^{1/4}u_i^N
  \|_{L^4(0,T;L^4(\T^d))}\big)\big(\sqrt{\eps}\|(\rho_i^N)^{1/2}
  \na u_i^N\|_{L^2(0,T;L^2(\T^d))}\big) \\
  &\phantom{xx}\times\big(\sqrt[4]{\eps}
  \|\na(\rho_i^N)^{1/4}\|_{L^4(0,T;L^4(\T^d))}\big) \\
  &\le C + \frac{\eps}{16c(d)}
  \int_0^t\int_{\T^d}|\na(\rho_i^N)^{1/4}|^4 dxds \\
  &\le C + \frac{\eps}{16}\int_0^t\int_{\T^d}
  \rho_i^N|D^2\log\rho_i^N|^2 dxds,
\end{align*}
where in the last but one step we applied Young's inequality. For the remaining term $I_{115}$, we first reformulate it, and then use similar arguments as for $I_{113}$:
\begin{align*}
  I_{115} &= -\delta\eps\int_0^t\int_{\T^d}
  \diver(\rho_i^N\na\log\rho_i^N)\diver u_i^N dxds \\
  &= -\delta\eps\int_0^t\int_{\T^d}\big(\rho_i^N\Delta\log\rho_i^N
  + \na\rho_i^N\cdot\na\log\rho_i^N\big)\diver u_i^N dxds \\
  &= -\delta\eps\int_0^t\int_{\T^d}\big((\rho_i^N)^{1/2}
  \Delta\log\rho_i^N + 16|\na(\rho_i^N)^{1/4}|^2\big)(\rho_i^N)^{1/2}
  \diver u_i^N dxds \\
  &\le \delta\sqrt{\eps}\big(\|(\rho_i^N)^{1/2}\Delta\log\rho_i^N
  \|_{L^2(0,T;L^2(\T^d))}
  + 16\|\na(\rho_i^N)^{1/4}\|_{L^4(0,T;L^4(\T^d))}^2\big) \\
  &\phantom{xx}\times\sqrt{\eps}
  \|(\rho_i^N)^{1/2}\na u_i^N\|_{L^2(0,T;L^2(\T^d))} \\
  &\le C + \frac{\eps}{16}\int_0^t\int_{\T^d}
  \rho_i^N|D^2\log\rho_i^N|^2 dxds.
\end{align*}

In a similar way, the bounds in Corollary \ref{lem.est1} yield
\begin{align*}
  I_{12} &\le 4\big(\sqrt[4]{\eps}\|(\rho_i^N)^{1/4}u_i^N
  \|_{L^4(0,T;L^4(\T^d))}\big)\big(\sqrt{\eps}\|(\rho_i^N)^{1/2}
  \na u_i^N\|_{L^2(0,T;L^2(\T^d))}\big) \\
  &\phantom{xx}\times\big(\sqrt[4]{\eps}\|\na(\rho_i^N)^{1/4}
  \|_{L^4(0,T;L^4(\T^d))}\big) \\
  &\le C + \frac{\eps}{16}\int_0^t\int_{\T^d}
  \rho_i^N|D^2\log\rho_i^N|^2 dxds.
\end{align*}
We conclude from \eqref{2.I1} that
\begin{align*}
  I_1 \le C + \frac{3\eps}{16}\int_0^t\int_{\T^d}
  \rho_i^N|D^2\log\rho_i^N|^2 dxds - \delta\eps\int_0^t\int_{\T^d}
  \Delta\rho_i^N u_i^N\cdot\na\log\rho_i^N dxds,
\end{align*}
and the last term cancels with a part of $I_7$, since
\begin{align*}
  I_7 &= \delta\eps\int_0^t\int_{\T^d}\big(\na\log\rho_i^N
  \cdot\na u_i^N\cdot\na\rho_i^N
  + \Delta\rho_i^Nu_i^N\cdot\na\log\rho_i^N\big)dxds \\
  &\le \sqrt{\delta}\big(4\sqrt[4]{\delta\eps}\|\na(\rho_i^N)^{1/4}
  \|_{L^4(0,T;L^4(\T^d))}\big)^2\big(
  \sqrt{\eps}\|(\rho_i^N)^{1/2}\na u_i^N \|_{L^2(0,T;L^2(\T^d))}\big) \\
  &\phantom{xx}+ \delta\eps\int_0^t\int_{\T^d}
  \Delta\rho_i^Nu_i^N\cdot\na\log\rho_i dxds \\
  &\le C + \delta\eps\int_0^t\int_{\T^d}
  \Delta\rho_i^Nu_i^N\cdot\na\log\rho_i dxds.
\end{align*}
This shows that
\begin{align}\label{2.I127}
  I_1 + I_2 + I_7 \le C - \frac{5\eps}{16}
  \int_0^t\int_{\T^d}\rho_i^N|D^2\log\rho_i^N|^2 dxds.
\end{align}

{\em Step 2: Estimation of $I_3,\ldots,I_6$:}
We continue with the estimate of $I_3$:
\begin{align*}
  I_3 &= -\eps\int_0^t\int_{\T^d}
  \rho_i^N\na u_i^N:D^2\log\rho_i^N dxds \\
  &\le \frac{\eps}{16}\int_0^t\int_{\T^d}\rho_i^N|D^2\log\rho_i^N|^2 dxds
  + 4\eps\int_0^t\int_{\T^d}\rho_i^N|\na u_i^N|^2 dxds \\
  &\le \frac{\eps}{16}\int_0^t\int_{\T^d}\rho_i^N|D^2\log\rho_i^N|^2 dxds
  + C.
\end{align*}
By the approximative mass balance equation \eqref{2.mass}, we have
\begin{align*}
  I_4 &= -\eps\int_0^t\int_{\T^d}\frac{u_i^N\cdot\na\rho_i^N}{\rho_i^N}
  dxds = \eps\int_0^t\int_{\T^d}\frac{1}{\rho_i^N}\big(\pa_s\rho_i^N
  + \rho_i^N\diver u_i^N - \delta\Delta\rho_i^N\big)dxds \\
  &= \eps\int_0^t\int_{\T^d}\pa_s\log\rho_i^N dxds
  + \eps\int_0^t\int_{\T^d}\diver u_i^N dxds
  - \delta\eps\int_0^t\int_{\T^d}|\na\log\rho_i^N|^2 dxds \\
  &= \eps\int_{\T^d}\log\rho_i^N(t)dx - \eps\int_{\T^d}\log\rho_i^N(0)dx
  - \delta\eps\int_0^t\int_{\T^d}|\na\log\rho_i^N|^2 dxds \\
  &\le \eps\int_{\T^d}(\rho_i^N(t)-1)dx
  - \eps\int_{\T^d}\log\rho_i^0 dx
  - \delta\eps\int_{\T^d}|\na\log\rho_i^N|^2 dxds.
\end{align*}
Because of the $L^2(\T^d)$ bound for $\rho_i^N$ in Corollary \ref{lem.est1}, the first term on the right-hand side is bounded uniformly in $(N,\delta,\eps)$. The same holds true for the second term on the right-hand side, since $\log\rho_i^0$ is assumed to be integrable. We infer that
\begin{align*}
  I_4 \le C - \delta\eps\int_{\T^d}|\na\log\rho_i^N|^2 dxds.
\end{align*}
Furthermore, using H\"older's inequality, Young's inequality, and then inequality \eqref{1.ineq},
\begin{align*}
  I_5 &\le 4\big(\sqrt[4]{\eps}\|(\rho_i^N)^{1/4}u_i^N
  \|_{L^{4}(0,\infty;L^{4}(\T^d))}\big)^3
  \big(\sqrt[4]{\eps}\|\na(\rho_i^N)^{1/4}
  \|_{L^4(0,\infty;L^4(\T^d))}\big) \\
  &\le C + \frac{\eps}{16}\int_0^t\int_{\T^d}\rho_i^N|D^2\log\rho_i^N|^2
  dxds.
\end{align*}
Finally, the term $I_6$ is rewritten as
\begin{align*}
  I_6 = -\int_0^t\int_{\T^d}\na\rho_i^N\cdot\na(\rho_1^N+\rho_2^N)dxds,
\end{align*}
and it becomes nonpositive when added for $i=1,2$. We conclude that
\begin{align}\label{2.I3456}
  I_3+\cdots+I_6 &\le C + \frac{\eps}{8}\int_0^t\int_{\T^d}
  \rho_i^N|D^2\log\rho_i^N|^2 dxds
  - \delta\eps\int_{\T^d}|\na\log\rho_i^N|^2 dxds \\
  &\phantom{xx}
  -\int_0^t\int_{\T^d}\na\rho_i^N\cdot\na(\rho_1^N+\rho_2^N)dxds.
  \nonumber
\end{align}

{\em Step 3: End of the proof:} We insert \eqref{2.I127} and \eqref{2.I3456} into \eqref{2.I17} and sum over $i=1,2$ to conclude from \eqref{2.dHdt} the desired entropy inequality.
\end{proof}

The entropy inequality allows us to improve some bounds from Corollary \ref{lem.est1}.

\begin{corollary}[Uniform estimates II]\label{lem.est2}
Let $(\rho^N,u^N)$ be the local solution to \eqref{2.mass}--\eqref{2.inteq} constructed in Section \ref{sec.loc}. Then there exists $C>0$ independent of $(\delta,\eps,N)$ such that
\begin{align*}
  \|\na(\rho_1^N+\rho_2^N)\|_{L^2(0,\infty;L^2(\T^d))} &\le C, \\
  \sqrt{\eps}\|(\rho_i^N)^{1/2}\|_{L^2(0,\infty;H^2(\T^d))}
  + \sqrt[4]{\eps}\|\na(\rho_i^N)^{1/4}\|_{L^4(0,\infty;L^4(\T^d))}
  &\le C.
\end{align*}
\end{corollary}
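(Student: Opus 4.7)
The plan is to read off the three asserted bounds directly from the entropy inequality of Lemma \ref{lem.HN}, combining it with the pointwise inequality \eqref{1.ineq} and with the bounds already established in Corollary \ref{lem.est1}. Since every term on the left-hand side of Lemma \ref{lem.HN} is nonnegative and the right-hand side is independent of $t$, $N$, $\delta$, and $\eps$, each summand is controlled uniformly in these parameters.

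First, the bound
\[
  \int_0^t\int_{\T^d}|\na(\rho_1^N+\rho_2^N)|^2\,dxds\le C
\]
is exactly the second term on the left of Lemma \ref{lem.HN} and yields the first claimed estimate after letting $t\to\infty$ (the bound being independent of $t$). Second, I would combine the term $\frac{\eps}{8}\sum_i\int_0^t\int_{\T^d}\rho_i^N|D^2\log\rho_i^N|^2\,dxds\le C$ with inequality \eqref{1.ineq} to obtain
\[
  \eps\int_0^t\int_{\T^d}\bigl(|\Delta\sqrt{\rho_i^N}|^2+|\na\sqrt[4]{\rho_i^N}|^4\bigr)\,dxds\le C.
\]
The $L^4$-in-space-time bound for $\na(\rho_i^N)^{1/4}$ is exactly the second half of the second assertion, yielding $\sqrt[4]{\eps}\|\na(\rho_i^N)^{1/4}\|_{L^4(0,\infty;L^4(\T^d))}\le C$.

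Third, to upgrade the $L^2$ control of $\Delta\sqrt{\rho_i^N}$ to a full $H^2(\T^d)$ bound for $\sqrt{\rho_i^N}$, I would invoke elliptic regularity on the torus, namely the equivalence $\|v\|_{H^2(\T^d)}^2\sim \|v\|_{L^2(\T^d)}^2+\|\Delta v\|_{L^2(\T^d)}^2$. It remains to control $\sqrt{\eps}\|\sqrt{\rho_i^N}\|_{L^2(0,T;L^2(\T^d))}$, which follows on any finite time interval from the $L^\infty(0,\infty;L^2(\T^d))$ bound for $\rho_i^N$ provided by Corollary \ref{lem.est1}: since $\|\sqrt{\rho_i^N}\|_{L^4(\T^d)}^2=\|\rho_i^N\|_{L^2(\T^d)}\le C$ and $\T^d$ has finite measure, we have $\|\sqrt{\rho_i^N}\|_{L^\infty(0,\infty;L^2(\T^d))}\le C$. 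Combining gives $\sqrt{\eps}\|(\rho_i^N)^{1/2}\|_{L^2(0,T;H^2(\T^d))}\le C$, uniform in $(\delta,\eps,N)$.

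There is no real obstacle here: the corollary is just a bookkeeping statement that collects the consequences of the entropy inequality already proved in Lemma \ref{lem.HN}, the Bohm-potential lower bound \eqref{1.ineq}, and the previous energy estimates of Corollary \ref{lem.est1}. The only minor subtlety is that the $H^2$ bound on $\sqrt{\rho_i^N}$ is truly uniform on finite time intervals (rather than globally in time), since the $L^2(L^2)$ part of the $H^2$-norm is only bounded in $L^\infty_t L^2_x$; however, this suffices for all subsequent compactness arguments used in the paper.
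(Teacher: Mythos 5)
Your proof is correct and follows essentially the same route as the paper, which presents the corollary as an immediate consequence of the entropy inequality of Lemma \ref{lem.HN}, inequality \eqref{1.ineq}, and the bounds from Corollary \ref{lem.est1}. Your observation that the low-order part of the $H^2$-norm is only controlled on finite time intervals (the $L^2(\T^d)$-norm of $(\rho_i^N)^{1/2}$ is the conserved mass, hence not square-integrable over $(0,\infty)$) is a fair clarification of the paper's $L^2(0,\infty;H^2(\T^d))$ notation, which is in any case only used on $(0,T)$ in the subsequent arguments.
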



\subsection{Further uniform estimates}

We derive some spatial and time regularity bounds for $\rho_i^N$ and $\rho_i^N u_i^N$ uniform in $N$, which are needed for the limit $N\to\infty$.

\begin{lemma}[Spatial regularity]\label{lem.space}
For any $T>0$, there exists a constant $C(\eps)>0$ independent of $N$ and $\delta$ such that
\begin{align*}
  \|\rho_i^N\|_{L^\infty(0,T;L^3(\T^d))}
  + \|(\rho_i^N)^{1/2}\|_{L^4(0,T;W^{1,3}(\T^d))} &\le C(\eps), \\
  \|\rho_i^N\|_{L^2(0,T;W^{2,3/2}(\T^d))}
  + \|\rho_i^N u_i^N\|_{L^2(0,T;W^{1,3/2}(\T^d))} &\le C(\eps).
\end{align*}
\end{lemma}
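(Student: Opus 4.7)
The strategy is to exploit the $\eps$-dependent bounds from Corollaries \ref{lem.est1} and \ref{lem.est2}, namely
$\sqrt{\rho_i^N}\in L^\infty(0,T;H^1(\T^d))\cap L^2(0,T;H^2(\T^d))$,
$\sqrt{\rho_i^N}u_i^N\in L^\infty(0,T;L^2(\T^d))$,
$\sqrt{\rho_i^N}\na u_i^N\in L^2(0,T;L^2(\T^d))$, and
$(\rho_i^N)^{1/4}u_i^N\in L^4(0,T;L^4(\T^d))$, combined with the Sobolev embedding $H^1(\T^d)\hookrightarrow L^6(\T^d)$ valid for $d\le 3$, and standard Lebesgue interpolation.

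First I would derive the four bounds on $\rho_i^N$ alone. Applying $H^1\hookrightarrow L^6$ pointwise in time to $\sqrt{\rho_i^N}$ gives $\sqrt{\rho_i^N}\in L^\infty(0,T;L^6(\T^d))$, i.e.\ $\rho_i^N\in L^\infty(0,T;L^3(\T^d))$. For $(\rho_i^N)^{1/2}\in L^4(0,T;W^{1,3})$ I interpolate $\na\sqrt{\rho_i^N}$ between $L^\infty(0,T;L^2)$ (from Corollary \ref{lem.est1}) and $L^2(0,T;L^6)$ (from Corollary \ref{lem.est2} combined with $H^1\hookrightarrow L^6$ applied to $\na\sqrt{\rho_i^N}$). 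Since $\|f\|_{L^3}\le\|f\|_{L^2}^{1/2}\|f\|_{L^6}^{1/2}$, taking $L^4$ in time yields $\na\sqrt{\rho_i^N}\in L^4(0,T;L^3)$.

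Next for $\rho_i^N\in L^2(0,T;W^{2,3/2})$, I use the identities $\na\rho_i^N=2\sqrt{\rho_i^N}\na\sqrt{\rho_i^N}$ and $D^2\rho_i^N=2\na\sqrt{\rho_i^N}\otimes\na\sqrt{\rho_i^N}+2\sqrt{\rho_i^N}D^2\sqrt{\rho_i^N}$. The quadratic term is controlled by $\|\na\sqrt{\rho_i^N}\|_{L^4(0,T;L^3)}^2$ which is in $L^2(0,T;L^{3/2})$, and the product term is handled by H\"older using $\sqrt{\rho_i^N}\in L^\infty(0,T;L^6)$ and $D^2\sqrt{\rho_i^N}\in L^2(0,T;L^2)$, giving $L^2(0,T;L^{3/2})$. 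The zeroth and first order terms $\rho_i^N$ and $\na\rho_i^N$ fall into stronger spaces by the same decompositions and the embedding $L^3\hookrightarrow L^{3/2}$ on the torus.

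The main obstacle is the final bound $\rho_i^N u_i^N\in L^2(0,T;W^{1,3/2})$, because the naive decomposition $\na(\rho_i^N u_i^N)=2\sqrt{\rho_i^N}\na\sqrt{\rho_i^N}\otimes u_i^N+\rho_i^N\na u_i^N$ does not close: with only $\sqrt{\rho_i^N}u_i^N\in L^\infty(0,T;L^2)$, pairing with $\na\sqrt{\rho_i^N}\in L^4(0,T;L^3)$ produces $L^4(0,T;L^{6/5})$, which is \emph{weaker} than the desired $L^2(0,T;L^{3/2})$. The key trick is to boost the integrability of $\sqrt{\rho_i^N}u_i^N$ by writing it as $(\rho_i^N)^{1/4}\cdot(\rho_i^N)^{1/4}u_i^N$ and applying H\"older with $(\rho_i^N)^{1/4}\in L^\infty(0,T;L^{12})$ (coming from $\sqrt{\rho_i^N}\in L^\infty(0,T;L^6)$) and $(\rho_i^N)^{1/4}u_i^N\in L^4(0,T;L^4)$; this gives $\sqrt{\rho_i^N}u_i^N\in L^4(0,T;L^3)$. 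Then the problematic term lies in $L^2(0,T;L^{3/2})$ via $L^4L^3\cdot L^4L^3=L^2L^{3/2}$, while the second term $\rho_i^N\na u_i^N=\sqrt{\rho_i^N}\cdot\sqrt{\rho_i^N}\na u_i^N$ is in $L^2(0,T;L^{3/2})$ by H\"older from $L^\infty L^6\cdot L^2L^2$. The product $\rho_i^N u_i^N=\sqrt{\rho_i^N}\cdot\sqrt{\rho_i^N}u_i^N$ is likewise controlled in $L^\infty(0,T;L^{3/2})$, completing the proof.
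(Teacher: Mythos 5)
Your proposal is correct, and it follows the paper's overall skeleton (the same product decompositions of $\na\rho_i^N$, $D^2\rho_i^N$, $\na(\rho_i^Nu_i^N)$ and the same a priori bounds from Corollaries \ref{lem.est1} and \ref{lem.est2}), but it diverges in two places. First, for the $L^4(0,T;W^{1,3})$ bound you interpolate $\na(\rho_i^N)^{1/2}$ between $L^\infty(0,T;L^2)$ and $L^2(0,T;L^6)$, whereas the paper uses the Gagliardo--Nirenberg inequality with $\theta=1/2$ between $H^2$ and $L^6$; these are essentially equivalent. Second, and more substantially, for the term $\na(\rho_i^N)^{1/2}\otimes(\rho_i^N)^{1/2}u_i^N$ you boost $(\rho_i^N)^{1/2}u_i^N$ to $L^4(0,T;L^3)$ via the drag-force bound $(\rho_i^N)^{1/4}u_i^N\in L^4(0,T;L^4)$ and pair it with $\na(\rho_i^N)^{1/2}\in L^4(0,T;L^3)$; the paper instead pairs $\na(\rho_i^N)^{1/2}\in L^2(0,T;L^6)$ (from the $\eps$-weighted $H^2$ bound) with $(\rho_i^N)^{1/2}u_i^N\in L^\infty(0,T;L^2)$, which lands directly in $L^2(0,T;L^{3/2})$. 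So your claim that the ``naive decomposition does not close'' is only true if one insists on using $\na(\rho_i^N)^{1/2}$ in $L^4(0,T;L^3)$; with the available $L^2(0,T;L^6)$ bound it closes without the $(\rho_i^N)^{1/4}u_i^N$ trick. A side benefit of your route is that it never invokes $H^2(\T^d)\hookrightarrow L^\infty(\T^d)$ (the paper uses it to put $\rho_i^Nu_i^N$ in $L^2(0,T;L^2)$, while you settle for $L^\infty(0,T;L^{3/2})$, which suffices for the stated $W^{1,3/2}$ bound), though both arguments still require $d\le 3$ through $H^1(\T^d)\hookrightarrow L^6(\T^d)$.
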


\begin{proof}
The first bound for $\rho_i^N$ is an immediate consequence of Corollary \ref{lem.est1} and the Sobolev embedding $H^1(\T^d)\hookrightarrow L^6(\T^d)$, yielding a uniform bound for $(\rho_i^N)^{1/2}$ in $L^\infty(0,T;L^6(\T^d))$ (uniform in $N$ and $\delta$). It follows from the embedding $H^2(\T^d)\hookrightarrow L^\infty(\T^d)$ (here we use the condition $d\le 3$) and Corollary \ref{lem.est2} that $(\rho_i^N)^{1/2}$ is uniformly bounded in $L^2(0,T;L^\infty(\T^d))$. Then, together with the uniform bound for $(\rho_i^N)^{1/2}u_i^N$ in $L^\infty(0,T;L^2(\T^d))$ from Corollary \ref{lem.est1}, we obtain a uniform estimate for $\rho_i^Nu_i^N=(\rho_i^N)^{1/2}\cdot(\rho_i^N)^{1/2}u_i^N$ in $L^2(0,T;L^2(\T^d))$. Furthermore, by Corollary \ref{lem.est2}, $(\na(\rho_i^N)^{1/2})$ is bounded in $L^2(0,T;L^6(\T^d))$. Then
\begin{align*}
  \na(\rho_i^N u_i^N) = 2\na(\rho_i^N)^{1/2}\cdot(\rho_i^N)^{1/2}u_i^N
  + 2(\rho_i^N)^{1/2}\cdot(\rho_i^N)^{1/2}\na u_i^N
\end{align*}
is uniformly bounded in $L^2(0,T;L^{3/2}(\T^d))$. This yields a uniform bound for $\rho_i^Nu_i^N$ in $L^2(0,T;W^{1,3/2}(\T^d))$.

Next, we apply the Gagliardo--Nirenberg inequality with $\theta=1/2$:
\begin{align*}
  \|\na(\rho_i^N)^{1/2}\|_{L^4(0,T;L^3(\T^d))}^4
  &\le C\int_0^T\|(\rho_i^N)^{1/2}\|_{H^2(\T^d)}^{4\theta}
  \|(\rho_i^N)^{1/2}\|_{L^6(\T^d)}^{4(1-\theta)}dt \\
  &\le C\|(\rho_i^N)^{1/2}\|_{L^\infty(0,T;L^6(\T^d))}^{2}
  \int_0^T\|(\rho_i^N)^{1/2}\|_{H^2(\T^d)}^2 dt \le C,
\end{align*}
showing that $(\rho_i^N)^{1/2}$ is uniformly bounded in $L^4(0,T;W^{1,3}(\T^d))$. Because of inequality \eqref{1.ineq}, the uniform bound for $(\rho_i^N)^{1/2}D^2\log\rho_i^N$ in $L^2(0,T;L^2(\T^d))$ implies a
uniform bound for $D^2(\rho_i^N)^{1/2}$ in $L^2(0,T;$ $L^2(\T^d))$. Thus,
\begin{align*}
  D^2\rho_i^N = 2(\rho_i^N)^{1/2}D^2(\rho_i^N)^{1/2}
  + 2\na(\rho_i^N)^{1/2}\otimes\na(\rho_i^N)^{1/2}
\end{align*}
is uniformly bounded in $L^2(0,T;L^{3/2}(\T^d))$. Finally, the bound for $(\rho_i^N)^{1/4}$ in $L^\infty(0,T;$ $L^{12}(\T^d))$ and for $\na(\rho_i^N)^{1/4}$ in $L^4(0,T;L^4(\T^d))$ yield an estimate for $\na(\rho_i^N)^{1/2}=2(\rho_i^N)^{1/4}$ $\na(\rho_i^N)^{1/4}$ in $L^4(0,T;L^3(\T^d))$, finishing the proof.
\end{proof}

\begin{lemma}[Time regularity]
For any $T>0$, there exists a constant $C(\eps)>0$ independent of $N$ and $\delta$ such that for $s>d/2+1$,
\begin{align*}
  \|\pa_t\rho_i^N\|_{L^2(0,T;L^{3/2}(\T^d))}
  + \|\pa_t(\rho_i^Nu_i^N)\|_{L^{4/3}(0,T;H^{s}(\T^d)')}
  + \|\pa_t(\rho_i^N)^{1/2}\|_{L^2(0,T;L^{2}(\T^d))} \le C.
\end{align*}
\end{lemma}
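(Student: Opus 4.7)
The plan is to derive the three bounds separately: read $\pa_t\rho_i^N$ directly from the regularized mass equation \eqref{2.mass}, deduce $\pa_t\sqrt{\rho_i^N}$ from the same equation by a chain-rule computation, and obtain $\pa_t(\rho_i^N u_i^N)$ from the momentum equation \eqref{2.inteq} by duality against test functions $\psi\in H^s(\T^d)$ with $s>d/2+1$ (so that $H^s\hookrightarrow W^{1,\infty}(\T^d)$ for $d\le 3$). Every term that appears admits an $L^2$-in-time bound except the cubic drag $\rho_i^N|u_i^N|^2 u_i^N$, which is responsible for the exponent $4/3$ in the momentum statement.

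For $\pa_t\rho_i^N$, equation \eqref{2.mass} gives $\pa_t\rho_i^N=-\diver(\rho_i^N u_i^N)+\delta\Delta\rho_i^N$. Lemma \ref{lem.space} controls $\rho_i^N u_i^N$ in $L^2(0,T;W^{1,3/2}(\T^d))$ and $\rho_i^N$ in $L^2(0,T;W^{2,3/2}(\T^d))$ uniformly in $N$ and $\delta\le 1$, so both contributions belong to $L^2(0,T;L^{3/2}(\T^d))$, yielding the first estimate.

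For $\pa_t(\rho_i^N u_i^N)$, I test each term of \eqref{2.inteq} against $\psi\in H^s(\T^d)$. The viscous term $\diver(\rho_i^N\na u_i^N)$, the relaxation term $\rho_i^N u_i^N$, and the $\delta\eps$-regularization $\delta\eps\diver(u_i^N\otimes\na\rho_i^N)$ are controlled in $L^2(0,T;L^{3/2}(\T^d))$ via Corollaries \ref{lem.est1}--\ref{lem.est2}; for example, $\rho_i^N\na u_i^N=(\rho_i^N)^{1/2}\cdot(\rho_i^N)^{1/2}\na u_i^N$ is a product of factors in $L^\infty(L^6)$ and $L^2(L^2)$. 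The force $\rho_i^N\na(\rho_1^N+\rho_2^N)$ uses $\rho_i^N\in L^\infty(L^3)$ (Lemma \ref{lem.space}) and $\na(\rho_1^N+\rho_2^N)\in L^2(L^2)$ (Corollary \ref{lem.est2}). The Korteweg contribution is handled in the integrated form on the right-hand side of \eqref{1.momw}, combining $\Delta\sqrt{\rho_i^N}\in L^2(L^2)$, $\na\sqrt{\rho_i^N}\in L^\infty(L^2)$, and $\sqrt{\rho_i^N}\in L^\infty(L^6)$. The convective flux $\rho_i^N u_i^N\otimes u_i^N$ lies in $L^\infty(0,T;L^1(\T^d))$ by the kinetic energy bound. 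The critical estimate is for the cubic drag, which I split as
\begin{align*}
\rho_i^N|u_i^N|^2 u_i^N=\big((\rho_i^N)^{1/4}u_i^N\big)^3\cdot(\rho_i^N)^{1/4},
\end{align*}
with $(\rho_i^N)^{1/4}u_i^N\in L^4(0,T;L^4(\T^d))$ from Corollary \ref{lem.est1} and $(\rho_i^N)^{1/4}\in L^\infty(0,T;L^{12}(\T^d))$ obtained from the $L^\infty(0,T;L^3(\T^d))$ bound of Lemma \ref{lem.space}. H\"older then places this quantity in $L^{4/3}(0,T;L^{6/5}(\T^d))$, and pairing with $\psi\in L^\infty(\T^d)$ produces the claimed $L^{4/3}(0,T;H^s(\T^d)')$ control.

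For $\pa_t\sqrt{\rho_i^N}$, the chain rule together with \eqref{2.mass} and the identity $\Delta\rho_i^N/\sqrt{\rho_i^N}=2\Delta\sqrt{\rho_i^N}+8|\na(\rho_i^N)^{1/4}|^2$ yields
\begin{align*}
\pa_t\sqrt{\rho_i^N}=-u_i^N\cdot\na\sqrt{\rho_i^N}-\tfrac12\sqrt{\rho_i^N}\diver u_i^N+\delta\Delta\sqrt{\rho_i^N}+4\delta\big|\na(\rho_i^N)^{1/4}\big|^2.
\end{align*}
The last three terms lie in $L^2(0,T;L^2(\T^d))$ by Corollaries \ref{lem.est1}--\ref{lem.est2} (using $\na(\rho_i^N)^{1/4}\in L^4(L^4)$ for the last summand), and the first is rewritten as $u_i^N\cdot\na\sqrt{\rho_i^N}=2\big((\rho_i^N)^{1/4}u_i^N\big)\cdot\na(\rho_i^N)^{1/4}$, a product of two functions in $L^4(L^4)$, hence in $L^2(L^2)$. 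The main obstacle throughout is precisely the cubic drag estimate: it is the unique source of the sub-quadratic time exponent, and the only viable decomposition is the $(3,1)$-splitting above, which crucially requires the entropy-based $L^\infty(L^{12})$ bound for $(\rho_i^N)^{1/4}$ on top of the energy-based $L^4(L^4)$ bound for $(\rho_i^N)^{1/4}u_i^N$.
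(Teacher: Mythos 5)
Your proof is correct and follows the same overall strategy as the paper: read $\pa_t\rho_i^N$ and $\pa_t(\rho_i^N)^{1/2}$ off the regularized mass equation (your chain-rule decomposition of $\pa_t(\rho_i^N)^{1/2}$ is exactly the paper's), and bound $\pa_t(\rho_i^Nu_i^N)$ term by term in $H^s(\T^d)'$ using $H^s\hookrightarrow W^{1,\infty}$. The one substantive difference is the cubic drag, and here your meta-claim is off: it is \emph{not} the unique source of the $4/3$ exponent, and the $(3,1)$-splitting is not the only viable one. The paper writes $\rho_i^N|u_i^N|^2u_i^N=\big((\rho_i^N)^{1/2}u_i^N\big)\cdot\big((\rho_i^N)^{1/2}|u_i^N|^2\big)$ with $(\rho_i^N)^{1/2}u_i^N\in L^\infty(0,T;L^2)$ and $(\rho_i^N)^{1/2}|u_i^N|^2=\big((\rho_i^N)^{1/4}|u_i^N|\big)^2\in L^2(0,T;L^2)$, placing the drag in $L^2(0,T;L^1(\T^d))\hookrightarrow L^2(0,T;H^s(\T^d)')$, i.e.\ better than $L^{4/3}$ in time and without needing the $L^\infty(0,T;L^{12})$ bound for $(\rho_i^N)^{1/4}$ (which, incidentally, comes from the energy estimates via $\na(\rho_i^N)^{1/2}\in L^\infty(0,T;L^2)$, not from the entropy). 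In the paper the exponent $4/3$ instead arises from the Korteweg term and from $\delta\eps\diver(u_i^N\otimes\na\rho_i^N)$, which are paired with test functions in $L^4(0,T;W^{1,3}(\T^d))$ and land in $L^{4/3}(0,T;W^{1,3}(\T^d)')$ resp.\ $L^{4/3}(0,T;W^{1,6}(\T^d)')$; your alternative treatment of these two terms (pairing the Korteweg term with $\psi\in L^\infty$, and using the $\eps$-dependent $L^2(0,T;L^6)$ bound of $\na(\rho_i^N)^{1/2}$ for the $\delta\eps$ term) is also fine since the constant may depend on $\eps$. Finally, a small wording issue: $\diver(\rho_i^N\na u_i^N)$ and $\diver(u_i^N\otimes\na\rho_i^N)$ are of course not themselves in $L^2(0,T;L^{3/2})$; it is the fluxes that are, with the divergence handled by duality, as you in effect do. None of this affects the validity of your estimates, which do yield the stated $L^{4/3}(0,T;H^s(\T^d)')$ bound.
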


\begin{proof}
We deduce from Lemma \ref{lem.space} that
\begin{align*}
  \pa_t\rho_i^N = -\diver(\rho_i^Nu_i^N) + \delta\Delta\rho_i^N
\end{align*}
is uniformly bounded in $L^2(0,T;L^{3/2}(\T^d))$. The estimate on $\pa_t(\rho_i^Nu_i^N)$ follows from the following bounds, which are consequences of Corollaries \ref{lem.est1} and \ref{lem.est2}, as well as from the spatial regularity of Lemma \ref{lem.space}:
\begin{itemize}
\item The sequence $(\rho_i^N u_i^N\otimes u_i^N)$ is bounded in $L^\infty(0,T;L^1(\T^d))$. Hence, $\diver(\rho_i^N u_i^N\otimes u_i^N)$ is bounded in $L^\infty(0,T;H^s(\T^d)')$, since $H^s(\T^d)$ $\hookrightarrow W^{1,\infty}(\T^d)$ for $s>d/2+1$.
\item We know that $\rho_i^N\na u_i^N
=(\rho_i^N)^{1/2}\cdot(\rho_i^N)^{1/2}\na u_i^N$ is bounded in $L^2(0,T;L^{3/2}(\T^d))$. Thus, $\diver(\rho_i^N\na u_i^N)$ is bounded in $L^2(0,T;W^{1,3}(\T^d)')\hookrightarrow L^2(0,T;H^s(\T^d)')$.
\item Let $\psi\in L^4(0,T;W^{1,3}(\T^d;\R^d))$. Then, by integration by parts,
\begin{align*}
  \int_0^T&\int_{\T^d}\rho_i^N\na\bigg(\frac{\Delta(\rho_i^N)^{1/2}}{
  (\rho_i^N)^{1/2}}\bigg)\cdot\psi dxds \\
  &= -\int_0^T\int_{\T^d}\Delta(\rho_i^N)^{1/2}\big(2\na(\rho_i^N)^{1/2}
  \cdot\psi + (\rho_i^N)^{1/2}\diver\psi\big)dxds \\
  &\le 2\|\Delta(\rho_i^N)^{1/2}\|_{L^2(0,T;L^2(\T^d))}
  \|\na(\rho_i^N)^{1/2}\|_{L^4(0,T;L^{3}(\T^d))}
  \|\psi\|_{L^4(0,T;L^6(\T^d))} \\
  &\phantom{xx} + \|\Delta(\rho_i^N)^{1/2}\|_{L^2(0,T;L^2(\T^d))}
  \|(\rho_i^N)^{1/2}\|_{L^\infty(0,T;L^6(\T^d))}
  \|\psi\|_{L^2(0,T;W^{1,3}(\T^d))} \\
  &\le C\|\psi\|_{L^4(0,T;W^{1,3}(\T^d))}.
\end{align*}
This proves that $\rho_i^N\na(\Delta(\rho_i^N)^{1/2}/(\rho_i^N)^{1/2})$ is uniformly bounded in $L^{4/3}(0,T;W^{1,3}(\T^d)')$ $\hookrightarrow L^{4/3}(0,T;H^s(\T^d)')$.
\item The sequence $\rho_i^N|u_i^N|^2 u_i^N = (\rho_i^N)^{1/2}u_i^N\cdot
(\rho_i^N)^{1/2}|u_i^N|^2$ is bounded in $L^2(0,T;L^1(\T^d))$ $\hookrightarrow L^{2}(0,T;H^s(\T^d)')$, since $(\rho_i^N)^{1/2}u_i^N$ is bounded in $L^\infty(0,T;L^2(\T^d))$ and $(\rho_i^N)^{1/2}$ $|u_i^N|^2$ is bounded in $L^2(0,T;L^2(\T^d))$ by Corollary \ref{lem.est1}.
\item By Lemma \ref{lem.space}, $\rho_i^N u_i^N$ is uniformly bounded in $L^2(0,T;W^{1,3/2}(\T^d))\hookrightarrow L^2(0,T;$ $L^{3/2}(\T^d))$.
\item In view of the $L^\infty(0,T;L^3(\T^d))$ bound of $\rho_i^N$ from Lemma \ref{lem.space} and the $L^2(0,T;L^2(\T^d))$ bound of $\na(\rho_1^N+\rho_2^N)$ from Corollary \ref{lem.est2}, $\rho_i^N\na(\rho_1^N+\rho_2^N)$ is bounded in $L^2(0,T;L^{6/5}(\T^d))$ $\hookrightarrow L^2(0,T;H^s(\T^d)')$.
\item We deduce from the $L^2(0,T;L^2(\T^d))$ bound of $(\rho_i^N)^{1/2}u_i^N$ (Corollary \ref{lem.est1}) and the $L^4(0,T;L^3(\T^d))$ bound of $\na(\rho_i^N)^{1/2}$ that
$u_i^N\otimes\na\rho_i^N=2(\rho_i^N)^{1/2}u_i^N\otimes\na(\rho_i^N)^{1/2}$ is bounded in $L^{4/3}(0,T;L^{6/5}(\T^d))$. Hence, the sequence $\diver(u_i^N\otimes\na\rho_i^N)$ is bounded in $L^{4/3}(0,T;W^{1,6}(\T^d)')\hookrightarrow L^{4/3}(0,T;H^s(\T^d)')$.
\end{itemize}
We conclude that
\begin{align*}
  \pa_t(\rho_i^Nu_i^N) &= -\diver(\rho_i^Nu_i^N\otimes u_i^N)
  + \rho_i^N\na\bigg(\frac{\Delta(\rho_i^N)^{1/2}}{ (\rho_i^N)^{1/2}}
  \bigg) + \diver(\rho_i^N\na u_i^N) - u_i^N \\
  &\phantom{xx}- \rho_i^N u_i^N|u_i^N|^2
  - \eps^{-1}k_i^{-1}\rho_i^Nu_i^N - \eps^{-1}\rho_i^N\na(\rho_1+\rho_2^N)
  + \delta\diver(u_i^N\otimes\na\rho_i^N)
\end{align*}
is uniformly bounded in $L^{4/3}(0,T;H^s(\T^d)')$. Finally, the sequence
\begin{align*}
  \pa_t(\rho_i^N)^{1/2} &= -\frac12(\rho_i^N)^{1/2}\diver u_i^N
  - 2\na(\rho_i^N)^{1/4}\cdot((\rho_i^N)^{1/4}u_i^N) \\
  &\phantom{xx}+ \delta\Delta(\rho_i^N)^{1/2}
  + 4\delta|\na(\rho_i^N)^{1/4}|^2
\end{align*}
is bounded in $L^2(0,T;L^2(\T^d))$.
\end{proof}


\subsection{Limit $N\to\infty$}

The spatial and time regularity for $(\rho_i^N)^{1/2}$, $\rho_i^N$, and $\rho_i^Nu_i^N$ allow us to apply the Aubin--Lions compactness lemma to conclude the existence of a subsequence (not relabeled) such that, as $N\to\infty$,
\begin{align*}
  (\rho_i^N)^{1/2}\to\sqrt{\rho_i} &\quad\mbox{strongly in }
  L^2(0,T;H^1(\T^d)), \\
  \rho_i^N\to\rho_i &\quad\mbox{strongly in }L^2(0,T;L^2(\T^d)), \\
  \rho_i^Nu_i^N \to J_i &\quad\mbox{strongly in }L^2(0,T;L^2(\T^d)).
\end{align*}
Furthermore, we have the weak convergences (up to subsequences)
\begin{align*}
  (\rho_i^N)^{1/2}\rightharpoonup \sqrt{\rho_i} &\quad\mbox{weakly in }
  L^2(0,T;H^2(\T^d)), \\
  u_i^N\rightharpoonup u_i &\quad\mbox{weakly in }L^2(0,T;L^2(\T^d)).
\end{align*}
It follows that $\rho_i^N u_i^N\rightharpoonup \rho_iu_i$ weakly in $L^1(0,T;L^1(\T^d))$, showing that $J_i=\rho_iu_i$. Moreover, by Lemma \ref{lem.space}, $\delta\na\rho_i^N=2\delta(\rho_i^N)^{1/2}\na(\rho_i^N)^{1/2}\to 0$ strongly in $L^4(0,T;L^2(\T^d))$.

With these convergences, we can pass to the limit $N\to\infty$ and $\delta\to 0$ in \eqref{2.mass}, formulated for $\phi\in C_0^\infty(\T^d\times[0,T))$ as
\begin{align*}
  0 &= -\int_0^T\int_{\T^d}\rho_i^N\pa_t\phi dxdt
  - \int_{\T^d}\rho_i^0\phi(0)dx
  - \int_0^T\int_{\T^d}\rho_i^N u_i^N\cdot\na\phi dxdt \\
   &\phantom{xx}+ \delta\int_0^T\int_{\T^d}\na\rho_i^N\cdot\na\phi dxdt,
\end{align*}
leading to
\begin{align*}
  0 = -\int_0^T\int_{\T^d}\rho_i\pa_t\phi dxdt
  - \int_{\T^d}\rho_i^0\phi(0)dx
  - \int_0^T\int_{\T^d}\rho_i u_i\cdot\na\phi dxdt.
\end{align*}

The limit in the momentum balance equation \eqref{2.inteq} is more involved. The strong convergence of $\rho_i^Nu_i^N$ and the weak convergence of $u_i^N$ in $L^2(0,T;L^2(\T^d))$ lead to
\begin{align*}
  \rho_i^Nu_i^N\otimes u_i^N\rightharpoonup \rho_iu_i\otimes u_i
  \quad\mbox{weakly in }L^1(0,T;L^1(\T^d)).
\end{align*}
Similarly, $(\rho_i^N)^{1/2}u_i^N\rightharpoonup\sqrt{\rho_i}u_i$ weakly in $L^2(0,T;L^2(\T^d))$, taking into account Corollary \ref{lem.est1}. Then, together with the strong convergences of $\na(\rho_i^N)^{1/2}$ and $\rho_i^Nu_i^N$ in $L^2(0,T;L^2(\T^d))$, we find that for $\psi\in C_0^\infty(\T^d\times(0,T);\R^{d\times d}))$, integrating by parts,
\begin{align}\label{rhonau}
  -\int_0^T&\int_{\T^d}\rho_i^N\na u_i^N:\psi dxdt
  = \int_0^T\int_{\T^d}u_i^N\cdot\diver(\rho_i^N\psi)dxdt \\
  &= \int_0^T\int_{\T^d}\big(2(\rho_i^N)^{1/2}u_i^N\cdot\psi\cdot
  \na(\rho_i^N)^{1/2} + \rho_i^Nu_i^N\cdot\diver\psi\big)dxdt
  \nonumber \\
  &\to \int_0^T\int_{\T^d}\big(2\sqrt{\rho_i}u_i\cdot\psi\cdot
  \na\sqrt{\rho_i} + \rho_iu_i\cdot\diver\psi\big)dxdt \nonumber \\
  &= -\int_0^T\langle\na u_i,\rho_i\psi\rangle_{H^1(\T^d)',
  H^1(\T^d)}dt = -\int_0^T\langle\rho_i\na u_i,\psi\rangle_{
  \mathcal{D}'(\T^d),\mathcal{D}(\T^d)}dt. \nonumber
\end{align}
Thus, $\diver(\rho_i^N\na u_i^N)\to\diver(\rho_i\na u_i)$ in the sense of distributions. In fact, because of the uniform bounds of $(\rho_i^N)^{1/2}$ in $L^\infty(0,T;H^1(\T^d))\hookrightarrow L^\infty(0,T;L^6(\T^d))$ and of $(\rho_i^N)^{1/2}\na u_i^N$ in $L^2(0,T;L^2(\T^d))$, this convergence also holds in the weak topology of $L^2(0,T;L^{3/2}(\T^d))$. The same bounds imply that
\begin{align*}
  u_i^N\otimes\na\rho_i^N = 2(\rho_i^N)^{1/2}u_i^N\otimes\na
  (\rho_i^N)^{1/2}
  \rightharpoonup 2\sqrt{\rho_i}u_i\otimes\na\sqrt{\rho_i}
  = u_i\otimes\na\rho_i
\end{align*}
weakly in $L^1(0,T;L^1(\T^d))$ and hence $\diver(u_i^N\otimes\na\rho_i^N)\rightharpoonup\diver(u_i\otimes\na\rho_i)$ weakly in $L^1(0,T;$ $H^s(\T^d)')$.

Next, since $\Delta(\rho_i^N)^{1/2}\rightharpoonup\Delta\sqrt{\rho_i}$ weakly in $L^2(0,T;L^2(\T^d))$ and $\na(\rho_i^N)^{1/2}\to\na\sqrt{\rho_i}$ strongly in $L^2(0,T;L^2(\T^d))$, for $\psi\in C_0^\infty(\T^d\times(0,T);\R^d)$,
\begin{align*}
  \int_0^T\int_{\T^d}&\rho_i^N\na\bigg(\frac{\Delta(\rho_i^N)^{1/2}}{
  (\rho_i^N)^{1/2}}\bigg)\cdot\psi dxdt \\
  &= -\int_0^T\int_{\T^d}\Delta(\rho_i^N)^{1/2}\big(2\na(\rho_i^N)^{1/2}
  \cdot\phi + (\rho_i^N)^{1/2}\diver\psi\big)dxdt \\
  &\to -\int_0^T\int_{\T^d}\Delta\sqrt{\rho_i}\big(2\na\sqrt{\rho_i}
  \cdot\psi + \sqrt{\rho_i}\diver\psi\big)dxdt.
\end{align*}

The convergence $\rho_i^N|u_i^N|^2u_i^N\to \rho_i|u_i|^2u_i$ strongly in $L^1(0,T;L^1(\T^d))$ has been proved in \cite[Lemma 2.3]{VaYu16}. For completeness, we recall the proof. The strong convergences of $(\rho_i^N)$ and $(\rho_i^Nu_i^N)$ imply, up to subsequence, that
$\rho_i^N\to \rho_i$ and $\rho_i^Nu_i^N\to \rho_iu_i$ a.e. Hence, for a.e.\ $(x,t)$, $u_i^N=(\rho_i^Nu_i^N)/\rho_i^N\to u_i$ whenever $\rho_i^N(x,t)\neq 0$. For a.e.\ $(x,t)$ for which $\rho_i^N(x,t)=0$, we have
\begin{align*}
  g_i^N := \rho_i^N|u_i^N|^2u_i^N\mathrm{1}_{\{|u_i^N|\le M\}}
  \le \rho_I^NM^3 = 0
\end{align*}
for any $M>0$. Consequently, $g_i^N\to \rho_i|u_i|^2u_i\mathrm{1}_{\{|u_i|\le M\}}$ a.e. As the sequence $(\rho_i^N)$ is bounded in $L^\infty(0,T;L^2(\T^d))$, $g_i^N$ is bounded in the same space. Then dominated convergence implies that
\begin{align}\label{2.giN}
  g_i^N\to \rho_i|u_i|^2u_i\mathrm{1}_{\{|u_i|\le M\}}
  \quad\mbox{strongly in }L^1(0,T;L^1(\T^d)).
\end{align}
Now, for any $M>0$,
\begin{align*}
  \int_0^T&\int_{\T^d}\big|\rho_i^N|u_i^N|^2u_i^N
  - \rho|u_i|^2u_i\big|dxdt \\
  &\le \int_0^T\int_{\T^d}
  \big|\rho_i^N|u_i^N|^2u_i^N\mathrm{1}_{\{|u_i^N|\le M\}}
  - \rho_i|u_i|^2u_i\mathrm{1}_{\{|u_i|\le M\}}\big|dxdt \\
  &\phantom{xx}+ \int_0^T\int_{\T^d}
  \big(\rho_i^N|u_i^N|^3\mathrm{1}_{\{|u_i^N|>M\}}
  + \rho_i|u_i|^3\mathrm{1}_{\{|u_i|>M\}}\big)dxdt \\
  &\le \int_0^T\int_{\T^d}
  \big|g_i^N - \rho_i|u_i|^2u_i
  \mathrm{1}_{\{|u_i|\le M\}}\big|dxdt \\
  &\phantom{xx}+ \frac{1}{M}\int_0^T\int_{\T^d}\big(
  \rho_i^N|u_i^N|^4 + \rho_i|u_i|^4\big)dxdt,
\end{align*}
observing that $\rho_i|u_i|^4$ is an element of $L^1(0,T;L^1(\T^d))$. The convergence \eqref{2.giN} shows that
\begin{align*}
  \lim_{N\to\infty}\int_0^T\int_{\T^d}\big|\rho_i^N|u_i^N|^2u_i^N
  - \rho|u_i|^2u_i\big|dxdt \le \frac{C}{M}
\end{align*}
for some $C>0$ and for all $M>0$. The limit $M\to\infty$ finishes the proof of the strong convergence of $\rho_i^N|u_i^N|^2u_i^N$.

The weak convergence of $\na(\rho_1^N+\rho_2^N)$ and the strong convergence of $\rho_i^N$ in $L^2(0,T;L^2(\T^d))$ imply that
\begin{align*}
  \rho_i^N\na(\rho_1^N+\rho_2^N)\rightharpoonup\rho_i\na(\rho_1+\rho_2)
  \quad\mbox{weakly in }L^1(0,T;L^1(\T^d)).
\end{align*}
We treat the $\delta$-regularized terms:
\begin{align*}
  \bigg|\delta\int_0^T\int_{\T^d}\na\rho_i^N\cdot\na\phi dxt\bigg|
  &\le \delta\|\rho_i^N\|_{L^1(0,T;L^1(\T^d))}
  \|\Delta\phi\|_{L^\infty(0,T;L^\infty(\T^d))} \to 0, \\
  \bigg|\delta\eps\int_0^T\int_{\T^d}(u_i^N\otimes\na\rho_i^N)
  :\na\phi dxdt\bigg| &\le 2\delta\eps\|(\rho_i^N)^{1/2}u_i^N
  \|_{L^2(0,T;L^2(\T^d))} \\
  &\phantom{xx}\times\|\na(\rho_i^N)^{1/2}\|_{L^2(0,T;L^2(\T^d))}
  \|\na\phi\|_{L^\infty(0,T;L^\infty(\T^d))} \to 0
\end{align*}
as $N\to\infty$ and $\delta\to 0$. These convergences are sufficient to pass to the limit $N\to\infty$ and $\delta\to 0$ in \eqref{2.inteq}.

It remains to perform the limit in the approximate energy inequality (see Lemma \ref{lem.EN}). This follows from the weak lower semicontinuity of the norms if we show that
\begin{align*}
  (\rho_i^N)^{1/2}\na u_i^N\rightharpoonup\sqrt{\rho_i}\na u_i
  &\quad\mbox{weakly in }L^2(0,T;L^2(\T^d)), \\
  (\rho_i^N)^{1/4}u_i^N \rightharpoonup \sqrt[4]{\rho_i}u_i
  &\quad\mbox{weakly in }L^4(0,T;L^4(\T^d)).
\end{align*}
In fact, we deduce from the bound on $(\rho_i^N)^{1/4}u_i^N$ that
$(\rho_i^N)^{1/4}u_i^N\rightharpoonup y_i$ weakly in $L^4(0,T;$ $L^4(\T^d))$ for some $y_i$. The strong convergence of $(\rho_i^N)^{1/4}$ in $L^4(0,T;L^4(\T^d))$ and the weak convergence of $u_i^N$ in $L^2(0,T;L^2(\T^d))$ imply that $(\rho_i^N)^{1/4}u_i^N\rightharpoonup\sqrt[4]{\rho_i}u_i$ weakly in $L^{4/3}(0,T;$ $L^{4/3}(\T^d))$ and consequently $y_i=\sqrt[4]{\rho_i}u_i$.
The remaining limit has been shown in \eqref{rhonau}.


\section{Proof of Theorem \ref{thm.eps}}\label{sec.eps}

We want to pass to the limit $\eps\to 0$ in \eqref{1.mass}--\eqref{1.mom}. Let $(\rho_i^\eps,u_i^\eps)_{i=1,2}$ be a weak solution to \eqref{1.mass}--\eqref{1.mom} constructed in Theorem \ref{thm.ex}. We set $\rho^\eps=(\rho_1^\eps,\rho_2^\eps)$, $u^\eps=(u_1^\eps,u_2^\eps)$, and $\bar{\rho}^\eps=\rho_1^\eps+\rho_2^\eps$. The energy and entropy estimates from Corollaries \ref{lem.est1} and \ref{lem.est2} yield in the limit $N\to\infty$ and $\delta\to 0$ the existence of a constant $C>0$ independent of $\eps$ such that for $i=1,2$,
\begin{equation}\label{3.unifeps}
\begin{aligned}
  \|\rho_i^\eps\|_{L^\infty(0,T;L^2(\T^d))}
  + \|\bar\rho^\eps\|_{L^2(0,T;H^1(\T^d))}
  + \|(\rho_i^\eps)^{1/2}u_i^\eps\|_{L^2(0,T;L^2(\T^d))} &\le C,
  \\ 
  \sqrt{\eps}\|(\rho_i^\eps)^{1/2}\|_{L^2(0,T;H^2(\T^d))}
  + \sqrt[4]{\eps}\|(\rho_i^\eps)^{1/4}\|_{L^4(0,T;W^{1,4}(\T^d))}
  &\le C, \\ 
  \sqrt{\eps}\|u_i^\eps\|_{L^2(0,T;L^2(\T^d))}
  + \sqrt[4]{\eps}\|(\rho_i^\eps)^{1/4}u_i^\eps\|_{L^4(0,T;L^4(\T^d))}
  &\le C.
\end{aligned}
\end{equation}
This shows that $\rho_i^\eps u_i^\eps = (\rho_i^\eps)^{1/2}\cdot(\rho_i^\eps)^{1/2}u_i^\eps$ is uniformly bounded in $L^2(0,T;L^{4/3}(\T^d))$. Therefore, there exist subsequences (not relabeled) such that, as $\eps\to 0$,
\begin{align*}
  \rho_i^\eps\rightharpoonup\rho_i &\quad\mbox{weakly* in }
  L^\infty(0,T;L^2(\T^d)), \\
  \rho_i^\eps u_i^\eps \rightharpoonup J_i &\quad\mbox{weakly in }
  L^2(0,T;L^{4/3}(\T^d)),\ i=1,2.
\end{align*}
In particular, $\pa_t\bar\rho^\eps = -\diver(\rho_1^\eps u_1^\eps + \rho_2^\eps u_2^\eps)$ is uniformly bounded in $L^2(0,T;W^{1,4}(\T^d)')$. Thanks to the $L^2(0,T;H^1(\T^d))$ bound for $\bar\rho^\eps$, we can apply the Aubin--Lions compactness lemma to conclude that, up to a subsequence,
\begin{align*}
  \bar\rho^\eps\to\bar\rho \quad\mbox{strongly in }L^2(0,T;L^p(\T^d)),
  \ p<6.
\end{align*}
Then the limit in the sum of \eqref{1.mass} over $i=1,2$, namely
$\pa_t\bar\rho^\eps+\diver(\rho_1^\eps u_1^\eps + \rho_2^\eps u_2^\eps)=0$, shows that $\bar\rho$ solves
\begin{align}\label{3.masslim}
  \pa_t\bar\rho + \diver(J_1+J_2) = 0.
\end{align}

The limit in \eqref{1.mom} is more involved, and we perform the limit only in the sum over $i=1,2$. We treat the various expressions term by term. First, the sequence
\begin{align*}
  \rho_i^\eps u_i^\eps\otimes u_i^\eps
  = \big((\rho_i^\eps)^{1/2}u_i^\eps\big)\otimes
  \big((\rho_i^\eps)^{1/2}u_i^\eps\big)
\end{align*}
is bounded in $L^1(0,T;L^1(\T^d))$, which implies that
\begin{align*}
  \eps\diver(\rho_i^\eps u_i^\eps\otimes u_i^\eps)\rightharpoonup 0
  \quad\mbox{weakly in }L^1(0,T;H^s(\T^d)').
\end{align*}
Furthermore, again by \eqref{3.unifeps},
\begin{align*}
  \sqrt{\eps}\rho_i^\eps\na u_i^\eps
  = \sqrt{\eps}\na (\rho_i^\eps u_i^\eps)
  - 2\sqrt{\eps}((\rho_i^\eps)^{1/2}u_i^\eps) \otimes \na(\rho_i^\eps)^{1/2}
\end{align*}
is uniformly bounded in $L^2(0,T;W^{1,4}(\T^d)')+L^1(0,T;L^1(\T^d))$, and thus
\begin{align*}
  \eps\diver(\rho_i^\eps\na u_i^\eps)\rightharpoonup 0\quad
  \mbox{weakly in }L^1(0,T;W^{2,4}(\T^d)').
\end{align*}
The Korteweg regularization in its weak formulation is estimated as follows:
\begin{align*}
  \bigg|\eps&\int_0^T\int_{\T^d}\Delta(\rho_i^\eps)^{1/2}\big(
  4(\rho_i^\eps)^{1/4}\na(\rho_i^\eps)^{1/4}\cdot\psi
  + (\rho_i^\eps)^{1/2}\diver\psi\big)dxdt\bigg| 
  \\
  &\le \sqrt[4]{\eps}\cdot
  \sqrt{\eps}\|\Delta(\rho_i^\eps)^{1/2}\|_{L^2(0,T;L^2(\T^d))}
  \big(4\sqrt[4]{\eps}\|\na(\rho_i^\eps)^{1/4}\|_{L^4(0,T;L^4(\T^d))}
  \\
  &\phantom{xx}\times\|(\rho_i^\eps)^{1/4}\|_{L^\infty(0,T;L^8(\T^d))}
  \|\psi\|_{L^4(0,T;L^8(\T^d))} \\
  &\phantom{xx}+ \sqrt[4]{\eps}
  \|(\rho_i^\eps)^{1/2}\|_{L^\infty(0,T;L^4(\T^d))}
  \|\diver\psi\|_{L^2(0,T;L^4(\T^d))}\big)
    \le \sqrt[4]{\eps}C\to 0,
\end{align*}
where $\psi\in L^4(0,T;W^{1,4}(\T^d;\R^d))$. The drag forces also vanish in the limit since $\eps u_i^\eps \to 0$ strongly in $L^2(0,T;L^2(\T^d))$ and
\begin{align*}
  \eps\rho_i^\eps|u_i^\eps|^2 u_i^\eps
  = \sqrt[4]{\eps}(\rho_i^\eps)^{1/4}\big(\sqrt[4]{\eps}
  (\rho_i^\eps)^{1/4}|u_i^\eps|\big)^2\big(\sqrt[4]{\eps}
  (\rho_i^\eps)^{1/4}u_i^\eps) = O(\sqrt[4]{\eps}) \to 0
\end{align*}
strongly in $L^{4/3}(0,T;L^{8/7}(\T^d))$. Finally,
\begin{align*}
  \sum_{i=1}^2\rho_i^\eps\na\bar\rho^\eps
  = \bar\rho^\eps\na\bar\rho^\eps
  \rightharpoonup \bar\rho\na\bar\rho
  \quad\mbox{weakly in }L^1(0,T;L^{1}(\T^d)).
\end{align*}

Hence, passing to the limit $\eps\to 0$ in sum of the weak formulation \eqref{1.momw} over $i=1,2$, we find that
$$
  k_1^{-1}J_1 + k_2^{-1}J_2 = -\bar\rho\na\bar\rho,
$$
or $J_1 = -k_1\bar\rho\na\bar\rho - (k_1/k_2)J_2$. Together with \eqref{3.masslim}, $\bar\rho$ solves
\begin{align*}
  \pa_t\bar\rho = k_1\diver(\bar\rho\na\bar\rho)
  - \bigg(1-\frac{k_1}{k_2}\bigg)\diver J_2
  = \diver\bigg(k_1\bar\rho\na\bar\rho
  - \bigg(1-\frac{k_1}{k_2}\bigg)J_2\bigg).
\end{align*}

\begin{remark}\rm
If we can identify $J_2=-k_2\rho_2\na\bar\rho$, we obtain
\begin{align*}
  k_1\bar\rho \na\bar\rho
  + \bigg(1-\frac{k_1}{k_2}\bigg)k_2\rho_2\na\bar\rho
  = (k_1\rho_1+k_2\rho_2)\na\bar\rho,
\end{align*}
such that the limit equation becomes
\begin{align*}
  \pa_t\bar\rho + \diver\big((k_1\rho_1+k_2\rho_2)\na\bar\rho\big) = 0.
\end{align*}
\end{remark}


\section{Proof of Theorem \ref{thm.eps2}: relative entropy inequality}
\label{sec.eps2}

The solution $(\bar\rho,\bar{u})$ with $\bar{u}=-\na\bar\rho$ is shown to solve fluid-type equations for which we derive an associated energy equality. Then we derive an inequality for the difference of the mass and momentum balance equations for $(\rho^\eps,u^\eps)$ and $(\bar\rho,\bar{u})$. These results allow us to prove the relative entropy inequality and to finish the proof of Theorem \ref{thm.eps2}.

\subsection{Energy equality for the limit system}

We analyze the limit system for $\bar\rho=\rho_1+\rho_2$ and $\bar{u}=-\na\bar\rho$. The mass balance equation reads as
\begin{align}\label{3.mass}
  \pa_t\bar\rho + \diver(\bar\rho\bar{u}) = 0,
\end{align}
with initial condition $\bar\rho(0)=\bar\rho^0:=\rho_1^0+\rho_2^0$. Inserting \eqref{3.mass}, the analog of the momentum balance equation for $\bar{u}$ becomes
\begin{align}\label{3.mom}
  \pa_t(\bar\rho\bar{u}) + \diver(\bar\rho\bar{u}\otimes\bar{u})
  = \bar\rho(\pa_t\bar{u}+(\bar{u}\cdot\na)\bar{u})
  = \bar\rho\bar{e},
\end{align}
where $\bar{e}:=\pa_t\bar{u}+(\bar{u}\cdot\na)\bar{u}$ is the material derivative of $\bar{u}$. The initial condition is $\bar\rho(0)\bar{u}(0)=-\bar\rho^0\na\bar\rho^0$.
According to Lemma \ref{lem.weak}, the weak formulations of \eqref{3.mass}--\eqref{3.mom} can be written as
\begin{align}
  0 &= -\int_0^t\int_{\T^d}\bar\rho\pa_s\phi dxds
  + \int_{\T^d}\bar\rho\phi\Big|^{s=t}_{s=0} dx
  + \int_0^t\int_{\T^d}\bar\rho\na\bar\rho\cdot\na\phi dxds,
  \label{3.massw} \\
  0 &= -\eps\int_0^t\int_{\T^d}\bar\rho\bar{u}\cdot\pa_s\psi dxds
  + \eps\int_{\T^d}\bar\rho\bar{u}\cdot\psi\Big|^{s=t}_{s=0} dx
  \label{3.momw}\\
  &\phantom{xx}
  - \eps\int_0^t\int_{\T^d}\bar\rho(\bar{u}\otimes\bar{u}):\na\psi dxds
  - \eps\int_0^t\int_{\T^d}\bar\rho\bar{e}\cdot\psi dxds
  \nonumber 
\end{align}
for test functions $\phi\in C_0^\infty(\T^d\times[0,t])$ and $\psi\in C_0^\infty(\T^d\times[0,t];\R^d)$.

We derive the energy equality associated to \eqref{3.mass}--\eqref{3.mom}. For this, we use the test function $\phi=\bar\rho-\eps|\bar{u}|^2/2$ in \eqref{3.massw} and recall that $\na\bar\rho=-\bar{u}$:
\begin{align*}
  0 &= -\int_0^t\int_{\T^d}\bar\rho\pa_s\bigg(\bar\rho
  - \frac{\eps}{2}|\bar{u}|^2\bigg) dxds
  + \int_{\T^d}\bar\rho(t)\bigg(\bar\rho(t) - \frac{\eps}{2}
  |\bar{u}(t)|^2\bigg)dx \\
  &\phantom{xx}- \int_{\T^d}\bar\rho^0\bigg(\bar\rho^0
  - \frac{\eps}{2}|\bar{u}^0|^2\bigg) dx
  + \int_0^t\int_{\T^d}\bar\rho|\bar{u}|^2 dx ds
  + \eps\int_0^t\int_{\T^d}\bar\rho\bar{u}\cdot\na\bar{u}\cdot\bar{u}
  dxds.
\end{align*}
Furthermore, with the test function $\psi=\bar{u}$ in \eqref{3.momw}, we find that
\begin{align*}
  0 &= -\eps\int_0^t\int_{\T^d}\bar\rho\bar{u}\cdot\pa_s\bar{u} dxds
  + \eps\int_{\T^d}\bar\rho(t)|\bar{u}(t)|^2 dx
  - \eps\int_{\T^d}\bar\rho^0|\bar{u}^0|^2 dx \\
  &\phantom{xx}
  - \eps\int_0^t\int_{\T^d}\bar\rho\bar{u}\cdot\na\bar{u}\cdot\bar{u}dxds
  - \eps\int_0^t\int_{\T^d}\bar\rho\bar{e}\cdot\bar{u}dxds.
\end{align*}
Adding the last two equations, some terms cancel, and observing that
$$
  \int_0^t\int_{\T^d}\bar\rho\pa_s\bar\rho dxds
  = \frac12\int_{\T^d}\bar\rho^2\Big|^{s=t}_{s=0}dx
$$
leads to the energy equality
\begin{align}\label{3.E0}
  E_0(\bar\rho(t),\bar{u}(t)) - E_0(\bar\rho^0,\bar{u}^0)
  + \int_0^t\int_{\T^d}\bar\rho|\bar{u}|^2 dxds
  = \eps\int_0^t\int_{\T^d}\bar\rho\bar{e}\cdot\bar{u} dxds,
\end{align}
where the energy associated to \eqref{3.mass}--\eqref{3.mom} is defined by
\begin{align*}
  E_0(\bar\rho,\bar{u}) = \int_{\T^d}\bigg(\frac{\bar\rho^2}{2}
  + \frac{\eps}{2}\bar\rho|\bar{u}|^2\bigg)dx.
\end{align*}

\subsection{Difference of mass balance equations}

We subtract the energy equality \eqref{3.E0} from the energy inequality \eqref{1.Eineq} (recall that $k_1=k_2=1$):
\begin{align}\label{3.Q3}
  E(&\rho^\eps(t),u^\eps(t)-E_0(\bar\rho(t),\bar{u}(t))
  + \int_0^t\int_{\T^d}\bigg(\sum_{i=1}^2\rho_i^\eps|u_i^\eps|^2
  -\bar\rho|\bar{u}|^2\bigg)dxds \\
  &\phantom{xx}+ \eps\sum_{i=1}^2\int_0^t\int_{\T^d}
  \big(\rho_i^\eps|\na u_i^\eps|^2
  + |u_i^\eps|^2 + \rho_i^\eps|u_i^\eps|^4\big)dxds
  + \eps\int_0^t\int_{\T^d}\bar\rho\bar{e}\cdot\bar{u}dxds \nonumber \\
  &\le E(\rho^0,u^0) - E_0(\bar\rho^0,\bar{u}^0). \nonumber
\end{align}
We use the test function $\phi=\bar\rho-\eps|\bar{u}|^2/2$ in the sum of the mass balance equations \eqref{1.massw} over $i=1,2$ (recalling that $\bar\rho^\eps=\rho_1^\eps+\rho_2^\eps$) and subtract the weak formulation \eqref{3.massw} with the same test function (observing that the terms at $t=0$ cancel):
\begin{align}\label{3.Q1}
  0 &= -\int_0^t\int_{\T^d}(\bar\rho^\eps-\bar\rho)
  \pa_s\bigg(\bar\rho - \frac{\eps}{2}|\bar{u}|^2\bigg)dxds
  + \int_{\T^d}(\bar\rho^\eps-\bar\rho)(t)
  \bigg(\bar\rho(t) - \frac{\eps}{2}|\bar{u}(t)|^2\bigg)dx \\
  &\phantom{xx}- \int_0^t\int_{\T^d}\bigg(\sum_{i=1}^2
  \rho_i^\eps u_i^\eps - \bar\rho\bar{u}\bigg)\cdot\na
  \bigg(\bar\rho - \frac{\eps}{2}|\bar{u}|^2\bigg)dxds.
  \nonumber
\end{align}

\subsection{Difference of momentum balance equations}

We add the weak formulation \eqref{1.momw} with test function $\psi=\bar{u}_i$ for $i=1,2$ and subtract the weak formulation \eqref{3.momw} for the limit system with the same test function (recalling that $(\bar\rho,\bar{u})$ is assumed to be smooth):
\begin{align}\label{3.Q2}
  0 &= -\eps\int_0^t\int_{\T^d}\bigg(\sum_{i=1}^2\rho_i^\eps u_i^\eps
  - \bar\rho\bar{u}\bigg)\cdot\pa_s\bar{u} dxds
  + \eps\int_{\T^d}\bigg(\sum_{i=1}^2\rho_i^\eps u_i^\eps
  - \bar\rho\bar{u}\bigg)\cdot \bar{u}\bigg|^{s=t}_{s=0}dx \\
  &\phantom{xx}
  -\eps\int_0^t\int_{\T^d}\bigg(\sum_{i=1}^2\rho_i^\eps u_i^\eps
  \otimes u_i^\eps - \bar\rho\bar{u}\otimes\bar{u}\bigg):\na\bar{u}dxds
  \nonumber \\
  &\phantom{xx}+ \eps\int_0^t\int_{\T^d}\sum_{i=1}^2
  \big\{\rho_i^\eps\na u_i^\eps:\na\bar{u}
  + \Delta(\rho_i^\eps)^{1/2}\big(2\na(\rho_i^\eps)^{1/2}\cdot\bar{u}
  + (\rho_i^\eps)^{1/2}\diver\bar{u}\big)\big\}dxds \nonumber \\
  &\phantom{xx} + \eps\int_0^t\int_{\T^d}\sum_{i=1}^2
  \big(u_i^\eps\cdot\bar{u}
  + \rho_i^\eps|u_i^\eps|^2 u_i^\eps\cdot\bar{u}\big)dxds
  + \int_0^t\int_{\T^d}\bigg(\sum_{i=1}^2\rho_i^\eps u_i^\eps
  - \bar\rho\bar{u}\bigg)\cdot\bar{u} dxds \nonumber \\
  &\phantom{xx}
  + \int_0^t\int_{\T^d}\big(\bar\rho^\eps\na\bar\rho^\eps
  - \bar\rho\na\bar\rho\big)\cdot\bar{u} dxds
  + \eps\int_0^t\int_{\T^d}\bar\rho\bar{e}\cdot\bar{u}dxds. \nonumber
\end{align}

\subsection{Relative energy inequality}

We recall the definition of the relative energy:
\begin{align*}
  E_R(\rho^\eps,u^\eps|\bar\rho,\bar{u})
  = \int_{\T^d}\bigg\{\frac12(\bar\rho^\eps-\bar\rho)^2
  + \eps\sum_{i=1}^2\bigg(\frac{\rho_i^\eps}{2}|u_i^\eps-\bar{u}|^2
  + |\na(\rho_i^\eps)^{1/2}|^2\bigg)\bigg\}dx,
\end{align*}
where $\bar\rho^\eps:=\rho_1^\eps+\rho_2^\eps$. The main task is to derive the relative entropy inequality.

\begin{lemma}[Relative energy inequality]\label{lem.rel}
There exists a constant $C>0$ independent of $\eps$ such that
\begin{align*}
  E_R(&\rho^\eps(t),u^\eps(t)|\bar\rho(t),\bar{u}(t))
  + \int_0^t\int_{\T^d}\sum_{i=1}^2\rho_i^\eps|u_i^\eps-\bar{u}|^2
  dxds \\
  &\phantom{xx}+ \frac{\eps}{2}\int_0^t\int_{\T^d}\sum_{i=1}^2\big(
  \rho_i^\eps|\na(u_i^\eps-\bar{u})|^2 + |u_i^\eps-\bar{u}|^2
  + \rho_i^\eps|u_i^\eps|^2|u_i^\eps-\bar{u}|^2\big)dxds \\
  &\le C\sqrt[4]{\eps}
  + C\int_0^tE_R(\rho^\eps,u^\eps|\bar\rho,\bar{u})(s)ds.
\end{align*}
In particular, for $i=1,2$, as $\eps\to 0$,
\begin{align*}
  (\rho_i^\eps)^{1/2}(u_i^\eps-\bar{u})\to 0
  \quad\mbox{strongly in }L^2(0,T;L^2(\T^d)).
\end{align*}
\end{lemma}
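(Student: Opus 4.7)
The idea is to form the combination $\eqref{3.Q3}-\eqref{3.Q1}-\eqref{3.Q2}$ of the three building blocks derived in the preceding subsections. Expanding the squares in the definition of the relative energy yields
\begin{align*}
E_R(\rho^\eps,u^\eps|\bar\rho,\bar u) = \bigl[E(\rho^\eps,u^\eps)-E_0(\bar\rho,\bar u)\bigr] - \int_{\T^d}(\bar\rho^\eps-\bar\rho)\Bigl(\bar\rho - \tfrac{\eps}{2}|\bar u|^2\Bigr)dx - \eps\sum_{i=1}^2\int_{\T^d}\rho_i^\eps u_i^\eps\cdot\bar u\,dx.
\end{align*}
With the test functions already used in \eqref{3.Q1} and \eqref{3.Q2} (namely $\phi=\bar\rho-\eps|\bar u|^2/2$ and $\psi=\bar u$), this combination places exactly $E_R(t)-E_R(0)$ on the left-hand side; moreover $E_R(0)=O(\eps)$, since $\bar\rho^\eps(0)=\bar\rho(0)$ and the kinetic and Korteweg contributions carry an $\eps$-prefactor.

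Next I would complete the square in each dissipation integral of \eqref{3.Q3} using, for instance, $\sum_i\rho_i^\eps|u_i^\eps|^2-\bar\rho|\bar u|^2=\sum_i\rho_i^\eps|u_i^\eps-\bar u|^2+2\sum_i\rho_i^\eps u_i^\eps\cdot\bar u-(\bar\rho^\eps+\bar\rho)|\bar u|^2$, together with the analogous splittings for $\rho_i^\eps|\na u_i^\eps|^2$, $|u_i^\eps|^2$, and $\rho_i^\eps|u_i^\eps|^4$ (the latter written as $|u_i^\eps|^2\bigl(|u_i^\eps-\bar u|^2+2(u_i^\eps-\bar u)\cdot\bar u+|\bar u|^2\bigr)$). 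The squared pieces supply precisely the relative dissipation announced in the lemma; the linear-in-$u_i^\eps$ cross-terms must cancel against corresponding contributions from \eqref{3.Q2}. Exploiting $\bar u=-\na\bar\rho$ and the smoothness of $(\bar\rho,\bar u)$, the convective term $\sum_i\rho_i^\eps u_i^\eps\otimes u_i^\eps-\bar\rho\bar u\otimes\bar u$ and the pressure contribution $(\bar\rho^\eps\na\bar\rho^\eps-\bar\rho\na\bar\rho)\cdot\bar u$ recombine with the flux term in \eqref{3.Q1} to produce, up to $O(\sqrt[4]\eps)$, the quantities $\int\rho_i^\eps(u_i^\eps-\bar u)\otimes(u_i^\eps-\bar u):\na\bar u\,dx$ and $-\tfrac12\int\diver\bar u\,(\bar\rho^\eps-\bar\rho)^2 dx$, both bounded by $\|\bar u\|_{W^{1,\infty}}E_R$, while the material-derivative term $\eps\int\bar\rho\bar e\cdot\bar u$ in \eqref{3.Q3} cancels with its counterpart in \eqref{3.Q2}. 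The remaining pieces --- the Korteweg integral $\eps\int\Delta(\rho_i^\eps)^{1/2}(2\na(\rho_i^\eps)^{1/2}\cdot\bar u+(\rho_i^\eps)^{1/2}\diver\bar u)$, the drag terms $\eps u_i^\eps\cdot\bar u$ and $\eps\rho_i^\eps|u_i^\eps|^2 u_i^\eps\cdot\bar u$, and the stress-tensor cross-term $\eps\rho_i^\eps\na u_i^\eps:\na\bar u$ --- are controlled by H\"older's and Young's inequalities together with the $\eps$-weighted a priori bounds \eqref{3.unifeps}; the worst one scales like $\sqrt[4]\eps\,\|(\rho_i^\eps)^{1/4}u_i^\eps\|_{L^4L^4}^3\|\bar u\|_{L^\infty}$, producing the $C\sqrt[4]\eps$ in the claim, and the stress-tensor cross-term is absorbed half into the relative dissipation $\eps\rho_i^\eps|\na(u_i^\eps-\bar u)|^2$ and half into $\eps\int\rho_i^\eps|\na\bar u|^2\le C\eps$.

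The main obstacle is the bookkeeping: every product of $u_i^\eps$ with $\bar u$ (or with its derivatives) generated by the square completions has to be traced either to a cancellation via the limit equations \eqref{3.mass}--\eqref{3.mom} or to an $O(\sqrt[4]\eps)$ remainder using exactly the right $\eps$-weight in \eqref{3.unifeps}; any mismatched $\eps$-power would prevent closing the inequality, and in particular the $|u_i^\eps|^4$-decomposition must rely on the drag dissipation to swallow the cubic cross-terms. The postscript to the lemma --- the strong convergence $(\rho_i^\eps)^{1/2}(u_i^\eps-\bar u)\to 0$ in $L^2(0,T;L^2(\T^d))$ --- then follows at once by applying Gr\"onwall's lemma to the resulting integral inequality: one gets $E_R(t)\le C\sqrt[4]\eps$, whence the non-negative kinetic dissipation on the left obeys $\int_0^T\int_{\T^d}\sum_i\rho_i^\eps|u_i^\eps-\bar u|^2\,dxds \le C\sqrt[4]\eps \to 0$.
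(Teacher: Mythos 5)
Your proposal follows essentially the same route as the paper: subtract \eqref{3.Q1} and \eqref{3.Q2} from \eqref{3.Q3} so that the boundary terms reproduce $E_R(t)-E_R(0)$ with $E_R(0)=O(\eps)$, recombine the convective, pressure and flux terms using $\bar{u}=-\na\bar\rho$ into $\int\rho_i^\eps(u_i^\eps-\bar{u})\otimes(u_i^\eps-\bar{u}):\na\bar{u}$ and $-\tfrac12\int(\bar\rho^\eps-\bar\rho)^2\diver\bar{u}$ (controlled by $E_R$), absorb the viscous, drag and Korteweg cross-terms via Young's inequality and the $\eps$-weighted bounds \eqref{3.unifeps} into the dissipation plus a $C\sqrt[4]{\eps}$ remainder, and conclude by Gronwall. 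The only blemish is the displayed algebraic identity for $E_R$, whose last term should be $-\eps\sum_i\int(\rho_i^\eps u_i^\eps-\bar\rho\bar{u})\cdot\bar{u}\,dx$ (i.e.\ it is off by $\eps\int\bar\rho|\bar{u}|^2dx$); this is exactly what the boundary term of \eqref{3.Q2} with $\psi=\bar{u}$ supplies, and in any case the discrepancy is $O(\eps)$, so the argument is unaffected.
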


\begin{proof}
We subtract \eqref{3.Q1} and \eqref{3.Q2} from \eqref{3.Q3}. An elementary computation shows that
\begin{align}\label{3.J1J11}
  \int_{\T^d}&\bigg(\frac12(\bar\rho^\eps-\bar\rho)^2
  + \frac{\eps}{2}\sum_{i=1}^2\rho_i^\eps|u_i^\eps-\bar{u}|^2
  + \eps\sum_{i=1}^2|\na(\rho_i^\eps)^{1/2}|^2\bigg)\Big|^{s=t}_{s=0}dx
  = J_1+\cdots+J_{11},
\end{align}
where
\begin{align*}
  J_1 &= -\int_0^t\int_{\T^d}\sum_{i=1}^2\rho_i^\eps
  |u_i^\eps-\bar{u}|^2dxds, \\
  J_2 &= -\int_0^t\int_{\T^d}\bigg(\sum_{i=1}^2\rho_i^\eps u_i^\eps
  - \bar\rho^\eps\bar{u}\bigg)\cdot\bar{u} dxds, \\
  J_3 &= -\int_0^t\int_{\T^d}(\bar\rho^\eps-\bar\rho)\pa_s
  \bigg(\bar\rho - \frac{\eps}{2}|\bar{u}|^2\bigg)dxds, \\
  J_4 &= -\eps\int_0^t\int_{\T^d}\bigg(\sum_{i=1}^2\rho_i^\eps u_i^\eps
  - \bar\rho\bar{u}\bigg)\cdot\pa_s\bar{u}dxds, \\
  J_5 &= -\int_0^t\int_{\T^d}\bigg(\sum_{i=1}^2\rho_i^\eps u_i^\eps
  - \bar\rho\bar{u}\bigg)\cdot\na
  \bigg(\bar\rho - \frac{\eps}{2}|\bar{u}|^2\bigg)dxds, \\
  J_6 &= -\eps\int_0^t\int_{\T^d}\bigg(\sum_{i=1}^2
  \rho_i^\eps u_i^\eps\otimes u_i^\eps - \bar\rho\bar{u}\otimes\bar{u}
  \bigg):\na\bar{u}dxds, \\
  J_7 &= \int_0^t\int_{\T^d}(\bar\rho^\eps\na\bar\rho^\eps
  - \bar\rho\na\bar\rho)\cdot\bar{u}dxds, \\
  J_8 &= -\eps\int_0^t\int_{\T^d}\sum_{i=1}^2\rho_i^\eps\na u_i^\eps
  :\na(u_i^\eps-\bar{u})dxds, \\
  J_9 &= -\eps\int_0^t\int_{\T^d}\sum_{i=1}^2 u_i^\eps\cdot
  (u_i^\eps-\bar{u})dxds, \\
  J_{10} &= -\eps\int_0^t\int_{\T^d}\sum_{i=1}^2\rho_i^\eps
  |u_i^\eps|^2 u_i^\eps\cdot(u_i^\eps-\bar{u})dxds, \\
  J_{11} &= \eps\int_0^t\int_{\T^d}\Delta(\rho_i^\eps)^{1/2}
  \big(2\na(\rho_i^\eps)^{1/2}\cdot\bar{u} + (\rho_i^\eps)^{1/2}
  \diver\bar{u}\big)dxds.
\end{align*}
The expression on the left-hand side of \eqref{3.J1J11} at $t=0$ is of order $\eps$, because of Assumption (A2) on the initial data and $(\rho^\eps-\bar\rho)(0)=0$. We wish to estimate $J_1,\ldots,J_{11}$.

We split the sum $J_3+\cdots+J_6$ into two parts, $J_3+\cdots+J_6=K_1+K_2$, where
\begin{align*}
  K_1 &= \frac{\eps}{2}\int_0^t\int_{\T^d}(\bar\rho^\eps-\bar\rho)
  \pa_s|\bar{u}|^2 dxds
  - \eps\int_0^t\int_{\T^d}\bigg(\sum_{i=1}^2\rho_i^\eps u_i^\eps
  - \bar\rho\bar{u}\bigg)\cdot\pa_s\bar{u}dxds \\
  &\phantom{xx}+ \frac{\eps}{2}\int_0^t\int_{\T^d}
  \bigg(\sum_{i=1}^2\rho_i^\eps u_i^\eps - \bar\rho\bar{u}\bigg)
  \na|\bar{u}|^2 dxds \\
  &\phantom{xx}
  - \eps\int_0^t\int_{\T^d}\bigg(\sum_{i=1}^2\rho_i^\eps u_i^\eps
  \otimes u_i^\eps - \bar\rho\bar{u}\otimes\bar{u}\bigg)
  :\na\bar{u} dxds \\
  &=: K_{11}+\cdots K_{14}, \\
  K_2 &= -\int_0^t\int_{\T^d}(\bar\rho^\eps-\bar\rho)\pa_s\bar\rho dxds
  - \int_0^t\int_{\T^d}\bigg(\sum_{i=1}^2\rho_i^\eps u_i^\eps
  - \bar\rho\bar{u}\bigg)\cdot\na\bar\rho dxds.
\end{align*}
Some terms cancel in $K_{11}+K_{12}$, and we end up with
\begin{align*}
  K_{11}+K_{12} = -\eps\int_0^t\int_{\T^d}\bigg(\sum_{i=1}^2
  \rho_i^\eps u_i^\eps - \bar\rho^\eps\bar{u}\bigg)\cdot\pa_s\bar{u}dxds.
\end{align*}
Also in the sum $K_{13}+K_{14}$, some terms cancel:
\begin{align*}
  K_{13}+K_{14} &= -\eps\int_0^t\int_{\T^d}\bigg(\sum_{i=1}^2
  \rho_i^\eps u_i^\eps - \bar\rho^\eps\bar{u}\bigg)\cdot\na\bar{u}
  \cdot\bar{u}dxds \\
  &\phantom{xx}- \eps\int_0^t\int_{\T^d}\sum_{i=1}^2
  \rho_i^\eps(u_i^\eps-\bar{u})
  \otimes(u_i^\eps-\bar{u}):\na\bar{u}dxds.
\end{align*}
Recalling that $\bar{e}=\pa_t\bar{u}+\bar{u}\cdot\na\bar{u}$, we obtain
\begin{align*}
  K_1 &= -\eps\int_0^t\int_{\T^d}\bigg(\sum_{i=1}^2
  \rho_i^\eps u_i^\eps - \bar\rho^\eps\bar{u}\bigg)\cdot\bar{e} dxds \\
  &\phantom{xx}- \eps\int_0^t\int_{\T^d}\sum_{i=1}^2
  \rho_i^\eps(u_i^\eps-\bar{u})
  \otimes(u_i^\eps-\bar{u}):\na\bar{u}dxds.
\end{align*}
Since $\bar{e}$ and $\bar{u}$ are assumed to be smooth, we can estimate $K_1$ according to
\begin{align*}
  K_1 &\le \eps C(\bar{e})\|\rho_1^\eps u_1^\eps + \rho_2^\eps u_2^\eps
  - \bar\rho^\eps\bar{u}\|_{L^2(0,T;L^1(\T^d))} \\
  &\phantom{xx}+ \eps C(\na\bar{u})\int_0^t\int_{\T^d}\rho_i^\eps
  |u_i^\eps-\bar{u}|^2 dxds \le C\eps,
\end{align*}
where the last step follows from estimates \eqref{3.unifeps}.

For $K_2$, we insert $\pa_t\bar\rho=-\diver(\bar\rho\bar{u})$, use $\na\bar\rho=-\bar{u}$, and integrate by parts:
\begin{align*}
  K_2 &= -\int_0^t\int_{\T^d}\na(\bar\rho^\eps-\bar\rho)
  \cdot(\bar\rho\bar{u})dxds + \int_0^t\int_{\T^d}
  \bigg(\sum_{i=1}^2\rho_i^\eps u_i^\eps - \bar\rho\bar{u}\bigg)
  \cdot\bar{u}dxds.
\end{align*}
We add to this expression
\begin{align*}
  J_2+J_7 &= -\int_0^t\int_{\T^d}\bigg(\sum_{i=1}^2
  \rho_i^\eps u_i^\eps - \bar\rho^\eps\bar{u}\bigg)\cdot\bar{u} dxds
  + \int_0^t\int_{\T^d}(\bar\rho^\eps\na\bar\rho^\eps
  - \bar\rho\na\bar\rho)\cdot\bar{u} dxds,
\end{align*}
which yields, using again $\bar{u}=-\na\bar\rho$ and integration by parts,
\begin{align*}
  K_2 + J_2 + J_7
  &= -\int_0^t\int_{\T^d}\big(\bar\rho(\na\bar\rho^\eps-\na\bar\rho)
  + (\bar\rho\bar{u}-\bar\rho^\eps\bar{u})
  - (\bar\rho^\eps\na\bar\rho^\eps - \bar\rho\na\bar\rho)
  \big)\cdot\bar{u} dxds \\
  &= \int_0^t\int_{\T^d}(\bar\rho^\eps-\bar\rho)
  \na(\bar\rho^\eps-\bar\rho)\cdot\bar{u} dxds
  = -\frac12\int_0^t\int_{\T^d}(\bar\rho^\eps-\bar\rho)^2
  \diver\bar{u}dxds \\
  &\le C(\bar{u})\int_0^t\int_{\T^d}(\bar\rho^\eps-\bar\rho)^2 dxds.
\end{align*}
This shows that
\begin{align*}
  J_2+\cdots+J_7 = K_1 + K_2 + J_2 + J_7
  \le C\eps + C\int_0^t\int_{\T^d}(\bar\rho^\eps-\bar\rho)^2 dxds.
\end{align*}

Next, we estimate $J_8$, using the Young and Cauchy--Schwarz inequalities:
\begin{align*}
  J_8 &= -\eps\int_0^t\int_{\T^d}\sum_{i=1}^2
  \big(\rho_i^\eps|\na(u_i^\eps-\bar{u})|^2
  + \rho_i^\eps\na\bar{u}:\na(u_i^\eps-\bar{u})\big)dxds \\
  &\le -\frac{\eps}{2}\int_0^t\int_{\T^d}\sum_{i=1}^2
  \rho_i^\eps|\na(u_i^\eps-\bar{u})|^2 dxds
  + \eps\sum_{i=1}^2\|\rho_i^\eps\|_{L^\infty(0,T;L^1(\T^d))}
  \|\na\bar{u}\|_{L^2(0,T;L^\infty(\T^d))}^2 \\
  &\le -\frac{\eps}{2}\int_0^t\int_{\T^d}\sum_{i=1}^2
  \rho_i^\eps|\na(u_i^\eps-\bar{u})|^2 dxds + C\eps,
\end{align*}
since the total mass of $\rho_i^\eps$ is uniformly bounded. We estimate $J_9$ and $J_{10}$ in a similar way, leading to
\begin{align*}
  J_9 + J_{10} &\le -\frac{\eps}{2}\int_0^t\int_{\T^d}\sum_{i=1}^2\big(
  |u_i^\eps-\bar{u}|^2 + \rho_i^\eps|u_i^\eps|^2|u_i^\eps-\bar{u}|^2
  \big)dxds + C\eps.
\end{align*}
By the uniform estimates \eqref{3.unifeps},
\begin{align*}
  J_{11} &\le \sqrt[4]{\eps}\sum_{i=1}^2\sqrt {\eps}
  \|\Delta(\rho_i^\eps)^{1/2}\|_{L^2(0,T;L^2(\T^d))}\big(4\sqrt[4]{\eps}
  \|\na(\rho_i^\eps)^{1/4}\|_{L^4(0,T;L^4(\T^d))} \\
  &\phantom{xx}\times\|(\rho_i^\eps)^{1/4}\|_{L^4(0,T;L^4(\T^d))}
  \|\bar{u}\|_{L^\infty(0,T;L^\infty(\T^d))} \\
  &\phantom{xx}+ \|(\rho_i^\eps)^{1/2}\|_{L^\infty(0,T;L^2(\T^d))}
  \|\diver\bar{u}\|_{L^2(0,T;L^\infty(\T^d))}\big)
  \le C(\bar{u})\sqrt[4]{\eps}.
\end{align*}
Thus, we obtain
\begin{align*}
  J_8&+\cdots+J_{11} \\
  &\le C\sqrt[4]{\eps} - \frac{\eps}{2}
  \int_0^t\int_{\T^d}\sum_{i=1}^2\big(
  \rho_i^\eps|\na(u_i^\eps-\bar{u})|^2 + |u_i^\eps-\bar{u}|^2 + \rho_i^\eps|u_i^\eps|^2|u_i^\eps-\bar{u}|^2\big)dxds.
\end{align*}

We summarize the previous estimates to infer from \eqref{3.J1J11} that
\begin{align*}
  \int_{\T^d}&\bigg(\frac12(\rho^\eps-\bar\rho)^2
  + \frac{\eps}{2}\sum_{i=1}^2\rho_i^\eps|u_i^\eps-\bar{u}|^2
  + \eps\sum_{i=1}^2|\na(\rho_i^\eps)^{1/2}|^2\bigg)(t)dx \\
  &\phantom{xx}
  + \int_0^t\int_{\T^d}\sum_{i=1}^2\rho_i^\eps|u_i^\eps-\bar{u}|^2
  dxds \\
  &\phantom{xx}+ \frac{\eps}{2}\int_0^t\int_{\T^d}\sum_{i=1}^2\big(
  \rho_i^\eps|\na(u_i^\eps-\bar{u})|^2 + |u_i^\eps-\bar{u}|^2
  + \rho_i^\eps|u_i^\eps|^2|u_i^\eps-\bar{u}|^2\big)dxds \\
  &\le C\sqrt[4]{\eps} + C\int_0^t\int_{\T^d}
  (\bar\rho^\eps-\bar\rho)^2 dxds.
\end{align*}
Gronwall's lemma concludes the proof of the lemma.
\end{proof}

\subsection{Proof of Theorem \ref{thm.eps2}}

It follows from Lemma \ref{lem.rel} that
\begin{align*}
  \|\rho_i^\eps(u_i^\eps-\bar{u})\|_{L^1(0,T;L^1(\T^d))}
  \le \|(\rho_i^\eps)^{1/2}\|_{L^2(0,T;L^2(\T^d))}
  \|(\rho_i^\eps)^{1/2}(u_i^\eps-\bar{u})\|_{L^2(0,T;L^2(\T^d))}
  \to 0
\end{align*}
as $\eps\to 0$. Together with the convergence $\rho_i^\eps\rightharpoonup \rho_i$ weakly* in $L^\infty(0,T;L^2(\T^d))$, this shows that
\begin{align*}
  \rho_i^\eps u_i^\eps = \rho_i^\eps(u_i^\eps-\bar{u})
  + \rho_i^\eps\bar{u} \rightharpoonup \rho_i\bar{u}
  \quad\mbox{weakly in }L^1(0,T;L^1(\T^d)).
\end{align*}
By definition, $\bar{u}=-\na\bar\rho$. Since $\rho_i^\eps u_i^\eps\rightharpoonup J_i$ weakly in $L^2(0,T;L^{4/3}(\T^d))$, we infer that $J_i=-\rho_i\na\bar\rho$. In particular, $\rho_i$ solves the transport equation $\pa_t\rho_i=-\diver J_i = \diver(\rho_i\na\bar\rho)$, while $\bar\rho$ is the solution to the porous-medium equation $\pa_t\bar\rho=\diver(\bar\rho\na\bar\rho)$ with $\bar\rho(0)=\rho_1^0+\rho_2^0$ in $\T^d$.


\end{document}